\newcommand\BibTeX{{\rmfamily B\kern-.05em \textsc{i\kern-.025em b}\kern-.08em
T\kern-.1667em\lower.7ex\hbox{E}\kern-.125emX}}
\numberwithin{equation}{section}
\begin{document}

\runningheads{Q. Deng and V. Ginting}{A Post-processing Technique}

\title{A Post-processing Technique for Streamline Upwind/Petrov-Galerkin for Advection Dominated Partial Differential Equations}

\author{Quanling Deng  \corrauth \footnotemark[2] and Victor Ginting \footnotemark[2] }

\address{Department of Mathematics, University of Wyoming, 1000 E University Ave, Laramie, WY 82071, U.S.A.}

\corraddr{Quanling Deng, Department of Mathematics, University of Wyoming, 1000 E University Ave, Laramie, WY 82071, U.S.A.}

\begin{abstract}
We consider the construction of locally conservative fluxes by means of a simple post-processing technique obtained from the finite element solutions of advection diffusion equations. It is known that a naive calculation of fluxes from these solutions yields non-conservative fluxes. We consider two finite element methods: the usual continuous Galerkin finite element (CGFEM) for solving non dominating advection diffusion equations and the streamline upwind/Petrov-Galerkin (SUPG) for solving advection dominated problems. We then describe the post-processing technique for constructing conservative fluxes from the numerical solutions of the general variational formulation.  The post-processing technique requires solving an auxiliary Neumann boundary value problem on each element independently  and it produces a locally conservative flux on a vertex centered dual mesh relative to the finite element mesh.  We provide a convergence analysis for the post-processing technique. Performance of the technique and the convergence behavior are demonstrated through numerical examples including a set of test problems for advection diffusion equations, advection dominated equations, and drift-diffusion equations.
\end{abstract}

\keywords{CGFEM; SUPG; advection diffusion; advection dominated; conservative flux; post-processing}

\maketitle

\footnotetext[2]{Email: qdeng@uwyo.edu, vginting@uwyo.edu}

\vspace{-6pt}

\section{Introduction} \label{sec:intro}

\noindent
The continuous Galerkin finite element method (CGFEM) is perhaps the most the popular method for solving partial differential equations and it has many advantages for the numerical simulations \cite{brenner2008mathematical}. A significant disadvantage of this method, however, is the lack of local conservation of quantities, such as pressure, mass, momentum, and energy. Numerical schemes for a wide range of applications are required to satisfy a local conservation property. For example, in a multiphase flow problem in porous media, local conservation of flux associated with the pressure is required to simulate the transport quantity. Negative concentrations and other non-physical results can be generated if the standard CGFEM is applied to solve the governing equations because the direct construction of Darcy velocity from the standard CGFEM solution produces normal fluxes which are not continuous across the boundary of each element, i.e., the velocity approximation is not locally conservative over mesh elements and divergence of the velocity field  over an element does not balance the discrete mass accumulation \cite{povich2013finite}. 
Another example is the drift-diffusion equations, which model the motion of electrons and holes in semiconductor materials \cite{mock1983analysis, markowich1986stationary}. It is desirable that the numerical schemes for the drift-diffusion yield solutions that satisfy local conservation of electron and hole current densities, and in addition, the numerical schemes also need to maintain certain stability criteria when the drift velocities dominate their diffusivities \cite{bochev2013new}. 

Due to its local conservation property, the finite volume method (FVM) is widely used for solving the governing equations coming from a conservation law in fluid dynamics \cite{leveque2002finite} and electrodynamics \cite{taflove2000computational}. For advection dominated equations or problems where internal or boundary layers may be produced, FVMs are developed \cite{manzini2008finite, bochev2013new}. A naive calculation of the FVM solutions gives locally conservative fluxes since the FVM solutions are governed by a set of local equations representing the local conservation. The theories and properties for lower order FVMs are well established \cite{eymard2000finite,  leveque2002finite} but the design and analysis on high order FVMs are still under investigation \cite{cai2003development, chen2010new, chen2012higher}. Both low order and high order CGFEMs are well established and understood \cite{brenner2008mathematical}. They are much desired due to their flexibility on handling complicated geometries and the
corresponding positive definite linear algebraic system associated with the methods, which in turn can be solved by typical iterative techniques in a straight forward manner. Thus, if one is to keep these advantages and at the same time requiring
numerical quantities that are locally conservative, then it is necessary to devise a method to obtain local conservative fluxes from the standard finite element methods.

The work in \cite{cockburn2007locally} develops a hybrid CGFEM method 
for pure diffusion problems
which includes a two-step post-processing to obtain the locally conservative flux. The first step is the direct computation of a numerical flux trace defined on edges or faces while the second step is a local element-by-element post-processing procedure based on the Raviart-Thomas projection. An enrichment-type method within the framework of CGFEM for both pure diffusion and advection diffusion problems is presented in \cite{hughes2000continuous} to obtain the the locally conservative flux by adding elementwise constant functions to the finite element space.  The work in \cite{sun2009locally} is also based on enriching the finite element space.  
A characteristics-mixed finite element method which is locally conservative for advection dominated transport problem is presented in \cite{arbogast1995characteristics}. The method uses a characteristic approximation for the advection in time combined with a low-order mixed finite element spatial approximation. 

A locally conservative Galerkin method is proposed for advection diffusion equations in \cite{nithiarasu2004simple}. A post-processing, which is a simple averaging procedure, is used to develop the method to recover the conservative fluxes on individual elements independently. A continuing work of \cite{nithiarasu2004simple} is presented in \cite{thomas2008element, thomas2008locally}. For instance, in \cite{thomas2008element}, the proposed locally conservative Galerkin method requires solving a system of simultaneous equations with weakly imposed Neumann boundary conditions on each element and hence the global matrix inversion is avoided. It is proved to be equivalent to the standard global CGFEM and yields locally conservative fluxes while maintaining the advantages of CGFEM. This method is applied to solve the two-phase flow problems in porous media in \cite{zhang2013locally}.

A global post-processing technique is introduced in \cite{toledo1989mixed} to obtain the conservative stresses in the context of stress recovery in two dimensional elasticity problems. A more general form of the technique is investigated in \cite{loula1995higher}, in which global and element-by-element post-processing techniques for finite element method are proposed within the framework of displacement methods and based on least-square residuals of the equilibrium equation and irrotationality condition. 
Another global post-processing technique is presented in \cite{srivastava1992three}. 

A local post-processing technique based on dual meshes and stream functions is presented in \cite{cordes1992continuous}. In two dimensional setting, each element is divided into four subtriangles to create the dual mesh and local conservative flux is obtained on dual mesh by introducing new and improved velocities on the subtriangles where the new subtriangle velocities are introduced to satisfy the local conservation property.

Another post-processing technique for CGFEM is investigated in \cite{larson2004conservative} recently. The technique obtains an elementwise conservative approximations of fluxes by modifying the CGFEM solutions which is based on the idea of enriching the solution space by adding additional discontinuous degrees of freedom using piecewise constant jump corrections. This technique is applied to gain locally conservative velocity approximations for stabilized finite element methods for solving advection dominated diffusion problems arising from variably saturated groundwater flow \cite{kees2008locally} and saltwater intrusion \cite{povich2013finite}.

In this paper, a post-processing technique is proposed for recovering the locally conservative fluxes from two finite element methods: CGFEM for solving non-dominating advection diffusion problems and SUPG for solving advection dominated problems. SUPG method is first promoted by Hughes {\it et al} in 1982 \cite{brooks1982streamline}; it introduces a certain amount of artificial diffusion in the streamline direction to give a proper stabilization when strong advection is present.  A common drawback of SUPG is that the amount of the artificial diffusion should be carefully selected by user and the amount is controlled by the stabilization parameter $\delta.$ Discussion on how to choose the parameter is found in \cite{brooks1982streamline, tezduyar2000finite, tezduyar2003stabilization}.


A recently proposed post-processing technique for CGFEM for pure diffusion elliptic problems is presented in \cite{bush2013application}, in which auxiliary elemental Neumann problems with the boundary conditions coming from the CGFEM solutions is proposed to recover locally conservative fluxes. These local problems yield low dimensional linear algebra systems which are independent of each other.  The local conservation property is satisfied on the control volumes, which are built from a vertex centered dual mesh. In particular for two dimensional triangular element setting, it is constructed by connecting the barycenter of the triangle and its middle points of the three edges. The post-processing technique proposed in \cite{bush2013application} is simple and the required extra computation is relatively inexpensive. 
In this paper, we aim at applying this technique to advection diffusion problems as well as advection dominated problems. Due to the existence of the advection term, the technique can not be applied directly. Firstly, the post-processing technique strongly depends on the SUPG/CGFEM formulation and the properties of the linear basis to ensure the compatibility condition for the local Neumann problem. One of the properties is that the sum of the basis functions on an element equals one and the other is that the gradient of the sum equals zero. To ensure the compatibility condition, the technique requires the SUPG/CGFEM formulation to have gradient operator on the test functions. Consequently, the technique requires integration by parts on both the diffusion term and the advection term. For the advection dominated equation, besides this requirement, the technique also requires the stabilization term to contain the gradient operator on the test functions. Secondly, for the construction of the local Neumann problem, the lower order term which is the advection term in the advection diffusion equation is considered as a data represented by the SUPG/CGFEM solution, which is then subtracted from the forcing term.

The rest of the paper is organized as follows. Section \ref{sec:supg} contains a concise description of a general formulation of SUPG/CGFEM where CGFEM is for the steady advection diffusion equation and SUPG is for the steady advection dominated diffusion equation as well as an obvious but important fact of the formulation.
Section \ref{sec:pp} presents the detailed post-processing technique for the general formulation. 
Section \ref{sec:ana} shows an analysis of the technique to establish the convergence of the post-processed quantity to the true one. Finally, numerical examples are given to demonstrate the performance of the technique, to confirm the convergence rates as well as to illustrate the validity of the local conservation in Section \ref{sec:num}.

\section{Streamline Upwind/Petrov-Galerkin Method} \label{sec:supg}
We consider the advection diffusion equation
\begin{equation}\label{pde}
\begin{cases}
\begin{aligned}
 \nabla \cdot ( -k\nabla u + \boldsymbol{v} u ) & =f \quad \text{in} \quad \Omega, \\
u&= g \quad \text{on} \quad \partial\Omega, \\
\end{aligned}
\end{cases}
\end{equation}

\noindent
where $\Omega$ is a bounded open domain in $\mathbb{R}^2$ with Lipschitz boundary $\partial\Omega$,  $k$ is the elliptic coefficient, $\boldsymbol{v}$ is the advective velocity, $u$ is the function to be found, $f$ is a forcing function. Assuming $k_{\max} \geq k \geq k_{\min} >0$ for all $\boldsymbol{x} \in\Omega$ and $\nabla \cdot \boldsymbol{v} \geq 0$,  Lax-Milgram theorem guarantees a unique weak solution to \eqref{pde}; see \cite{hundsdorfer2003numerical}.  For the polygonal domain $\Omega$, we consider a partition $\mathcal{T}_h$ consisting of triangular elements $\tau$ such that $\overline\Omega = \bigcup_{\tau\in\mathcal{T}_h} \tau.$ We set $h=\max_{\tau\in\mathcal{T}_h} h_\tau$ where $h_\tau$ is defined as the diameter of $\tau$. The linear finite element space is defined as
$$
V_h = \{w_h\in C(\overline\Omega): w_h|_\tau \ \text{is linear for all} \ \tau\in\mathcal{T}_h \ \text{and} \ w_h|_{\partial\Omega} = 0 \} \subset H^1_0(\Omega).
$$
The SUPG formulation for \eqref{pde} is to find $u_h$ with $(u_h - g_h) \in V_h$, such that
\begin{equation} \label{eq:supg}
a(u_h, w_h) = \ell(w_h) \quad \forall \ w_h \in V_h, 
\end{equation}
where 
\begin{equation*}
a(u_h, w_h) = \int_\Omega ( k \nabla u_h - u_h  \boldsymbol{v} ) \cdot \nabla w_h \ \text{d} \boldsymbol{x} + \int_\Omega \delta  \big(  \nabla \cdot ( -k\nabla u_h + \boldsymbol{v} u_h ) \big)  ( \boldsymbol{v}  \cdot \nabla w_h)  \ \text{d} \boldsymbol{x},
\end{equation*}
and
\begin{equation*}
\ell(w_h) = \int_\Omega f \big(  w_h + \delta \boldsymbol{v}  \cdot \nabla w_h \big)  \ \text{d} \boldsymbol{x}.
\end{equation*}
Here $g_h$ can be thought of as the interpolant of $g$ using the finite element basis.

Notice that when $\delta = 0$, \eqref{eq:supg} is reduced to the usual continuous Galerkin finite element method (CGFEM). When $k$ is much smaller compared to the advective velocity $\boldsymbol{v}$, \eqref{pde} is an advection dominated diffusion equation, in which case the CGFEM solutions develop spurious oscillations \cite{johnson2009numerical}.  SUPG is a popular and efficient remedy which adds artificial dissipation in the direction of velocity $\boldsymbol{v}$.  The coefficient $\delta$ is the SUPG stabilization parameter which controls the amount of artificial dissipation; discussion on how to choose the parameter can be found in \cite{brooks1982streamline, tezduyar2000finite, tezduyar2003stabilization}.

We will use \eqref{eq:supg} as a general formulation for both SUPG and CGFEM to describe the post-processing technique in Section \ref{sec:pp}. In preparation for describing the post-processing technique in Section \ref{sec:pp}, we present an important but obvious fact of the formulation. Let $Z$ be all vertices in the partition $\mathcal{T}_h$ with $Z = Z_{\text{in}} \cup Z_\text{d}$, where $Z_{\text{in}}$ is the set of interior vertices and $Z_\text{d}$ is the set of vertices on $\partial\Omega$. Denoting the basis of $V_h$ as $\{ \phi_z \}_{z \in Z_\text{in}}$, according to
\eqref{eq:supg} $u_h$ satisfies
\begin{equation} \label{eq:supgphi}
a(u_h, \phi_z) = \ell (\phi_z) \quad \forall \ z \in Z_{\text{in}}.
\end{equation}

\noindent
For a $z \in Z_{\text{in}}$, let $\Omega^z$ be the support of the basis function $\phi_z$. Then in the partition $\mathcal{T}_h$, $\Omega^z = \cup_{i=1}^N \tau_i^z$, where
$\tau_i^z$ is an element that has $z$ as one of its vertices, and
 $N$ is the total number of such elements. We then write
\begin{equation} \label{eq:Q}
\begin{aligned}
a(u_h, \phi_z) & = \int_{\Omega^z} \Big(
( k\nabla u_h - \boldsymbol{v} u_h )  \cdot \nabla \phi_z + \delta  \big(  \nabla \cdot ( -k\nabla u_h + \boldsymbol{v} u_h ) \big)  ( \boldsymbol{v}  \cdot \nabla \phi_z)
\Big) \ \text{d} \boldsymbol{x} \\
& = \sum_{i=1}^N \int_{\tau_i^z}   \Big( ( k\nabla u_h - \boldsymbol{v} u_h )  \cdot \nabla \phi_z + \delta  \big(  \nabla \cdot ( -k\nabla u_h + \boldsymbol{v} u_h ) \big)  ( \boldsymbol{v}  \cdot \nabla \phi_z)  \Big) \ \text{d} \boldsymbol{x} \\
& := \sum_{i=1}^N Q_i^z, \\
\ell(\phi_z) & = \int_{\Omega^z} f\big( \phi_z + \delta \boldsymbol{v}  \cdot \nabla \phi_z \big) \ \text{d} \boldsymbol{x} = \sum_{i=1}^N  \int_{\tau_i^z} f \big( \phi_z + \delta \boldsymbol{v}  \cdot \nabla \phi_z \big) \ \text{d} \boldsymbol{x}  := \sum_{i=1}^N F_i^z.
\end{aligned}
\end{equation}
Thus \eqref{eq:supgphi} becomes
\begin{equation} \label{eq:QF}
\sum_{i=1}^N Q_i^z = \sum_{i=1}^N F_i^z.
\end{equation}
Equation \eqref{eq:QF} is important and we will use this fact to derive the corresponding post-processing technique in Section \ref{sec:pp}.

\section{A Post-processing technique}  \label{sec:pp}

\noindent
In this section, we propose a post-processing technique for SUPG formulation \eqref{eq:supg} which naturally includes the CGFEM.

\subsection{Auxiliary Elemental Neumann Problem} \label{sec:bvp}

We want to construct a locally conservative flux over a control volume from SUPG
(and also  CGFEM) solutions.
We create a dual mesh relative to the finite element mesh to generate a set of control volumes, each of which is associated with vertex $z$ (see the middle plot of Figure \ref{fig:taucv}) and on which we would like the flux conservation to hold. We notice that without post-processing, $\boldsymbol{\nu}_h = -k \nabla u_h + \boldsymbol{v}u_h$ is discontinuous at the boundary of each element which violates the conservation for the flux. With post-processing, we hope to get $\widetilde {\boldsymbol{\nu}}_h \cdot \boldsymbol{n}$ that is continuous at the boundary of each control volumes and satisfies a local conservation in the sense
\begin{equation} \label{eq:cvconservation}
\int_{\partial C^z} \widetilde {\boldsymbol{\nu}}_h \cdot \boldsymbol{n}  \ \text{d} l = \int_{C^z} f \ \text{d} \boldsymbol{x}. 
\end{equation}

\begin{figure}[ht]
\centering
\includegraphics[height=4.5cm]{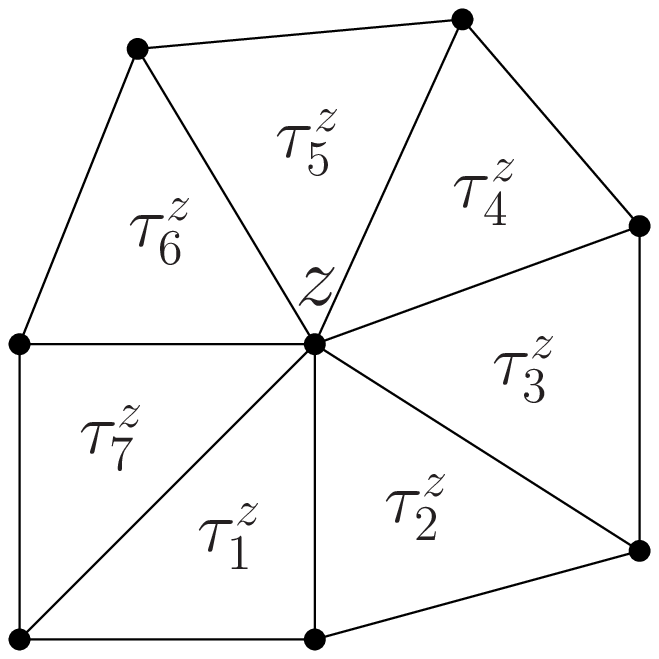}  \hspace*{0.45cm}
\includegraphics[height=4.5cm]{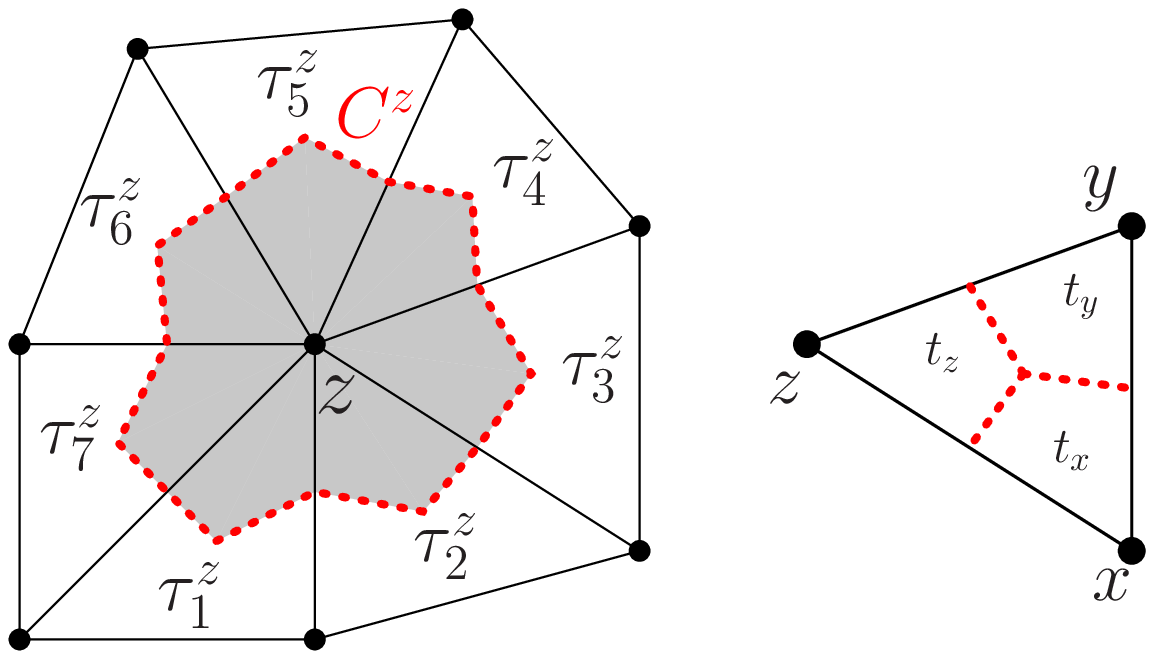}
\caption{$\Omega^z$: support of $\phi_z$ (left), control volume $C^z$ (middle), a finite element $\tau$ (right) }\label{fig:taucv}
\end{figure}

We set and solve a Neumann problem on $\tau$ using finite volume type method to obtain the conservative fluxes on each control volume. As presented in the right plot of Figure \ref{fig:taucv}, we define the set of vertices $v(\tau) = \{ x, y, z \}$ in the triangular element. We discretize each element $\tau$ into three non-overlapping quadrilaterals $t_x, t_y,$ and $t_z$. Let $ \xi \in v(\tau) $, in each quadrilateral $t_\xi$, and we decompose $\partial t_\xi $ as $\partial t_\xi = ( \partial \tau \cap \partial t_\xi ) \cup ( \partial C^\xi \cap \partial t_\xi ).$  The auxiliary problem for the post-processing is to find $\widetilde u_{\tau, h} \in V(\tau) =\text{span}\{ \phi_z, z \in v(\tau)\}$ satisfying 

\begin{equation} \label{eq:bvp}
- \int_{\partial t_\xi } k \nabla \widetilde u_{\tau, h} \cdot \boldsymbol{n}  \ \text{d} l = \int_{t_\xi} f \ \text{d} \boldsymbol{x} - \int_{\partial t_\xi } u_h \boldsymbol{v} \cdot \boldsymbol{n}  \ \text{d} l  \quad \forall \ \xi\in v(\tau),
\end{equation}
where $u_h$ is the solution of \eqref{eq:supg}. The boundary condition satisfies
\begin{equation} \label{eq:bvpb}
 \int_{\partial \tau \cap \partial t_\xi } ( - k \nabla \widetilde u_{\tau, h}  +  u_h \boldsymbol{v} ) \cdot \boldsymbol{n}  \ \text{d} l =
\int_{\partial \tau \cap \partial t_\xi } \widetilde g_\tau  \ \text{d} l
= F^\xi - Q^\xi \quad \forall \ \xi\in v(\tau),
\end{equation}

\noindent
where $F^\xi = \ell_\tau(\phi_\xi)$ defined as $ \ell(\cdot)$ restricted to $\tau$ and $Q^\xi = a_\tau(u_h, \phi_\xi)$ defined as $a(\cdot, \cdot)$ restricted to $\tau.$ The existence and uniqueness of the solution to this Neumann problem are established by the following lemma.

\newtheorem{lem}{Lemma}[section]
\begin{lem}  \label{lem:maxerr}
The fully Neumann problem \eqref{eq:bvp}-\eqref{eq:bvpb} has a unique solution up to a constant and the post-processed flux equals the true flux over the element $\tau$, i.e.,
\begin{equation} \label{eq:fluxed}
\int_{\partial\tau } ( -k \nabla \widetilde u_{\tau, h} + u_h \boldsymbol{v} ) \cdot \boldsymbol{n}  \ \text{d} l = \int_{\partial\tau } ( -k \nabla u + u \boldsymbol{v} ) \cdot \boldsymbol{n}  \ \text{d} l.
\end{equation}
\end{lem}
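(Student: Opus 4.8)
The plan is to treat \eqref{eq:bvp}--\eqref{eq:bvpb} as a square linear system for the three nodal values of $\widetilde u_{\tau, h} = \sum_{\eta \in v(\tau)} c_\eta \phi_\eta$, and to prove the two assertions separately. First I would show that the constants lie in the kernel, which is the source of the phrase ``up to a constant.'' The left-hand side of \eqref{eq:bvp} and the functional in \eqref{eq:bvpb} depend on $\widetilde u_{\tau, h}$ only through $\nabla \widetilde u_{\tau, h}$, and since $\sum_{\eta \in v(\tau)} \phi_\eta \equiv 1$ on $\tau$ we have $\nabla \big( \sum_{\eta} \phi_\eta \big) = 0$; hence shifting $\widetilde u_{\tau, h}$ by a constant leaves every equation unchanged. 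That the kernel is \emph{exactly} the constants I would obtain from $k \geq k_{\min} > 0$: testing the three homogeneous balances against the corresponding nodal values of $w \in V(\tau)$ assembles, over the median-dual control volumes, an elemental diffusion energy bounded below by $k_{\min} \| \nabla w \|_{L^2(\tau)}^2$, so $\nabla w = 0$ and $w$ is constant. Thus the system has rank two and $\nabla \widetilde u_{\tau, h}$ is uniquely determined.

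The heart of the existence argument, and the step I expect to be the main obstacle, is the Fredholm compatibility (solvability) condition: because the constants span the kernel, the data in \eqref{eq:bvp}--\eqref{eq:bvpb} must be consistent with the requirement that the net flux prescribed on $\partial\tau$ equal the net interior source. I would verify this by summing \eqref{eq:bvpb} over $\xi \in v(\tau)$ and invoking the two structural identities that the SUPG/CGFEM formulation was deliberately arranged to provide. Using $\sum_{\xi \in v(\tau)} \phi_\xi \equiv 1$ gives $\sum_{\xi} F^\xi = \ell_\tau(1) = \int_\tau f \ \text{d} \boldsymbol{x}$, since the term $\delta \boldsymbol{v} \cdot \nabla 1$ drops out; and because every term of $a(\cdot, \cdot)$ carries the gradient operator on its test slot, $\sum_{\xi} Q^\xi = a_\tau(u_h, 1) = 0$. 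Hence $\sum_{\xi} (F^\xi - Q^\xi) = \int_\tau f \ \text{d} \boldsymbol{x}$; this prescribed net outflow on $\partial\tau$ matches the net interior source (the advective contributions cancelling between the two sides), the compatibility condition holds, and a solution exists. This is exactly where the properties highlighted in the introduction, $\sum \phi_\xi = 1$ and $\nabla(\sum \phi_\xi) = 0$ together with the presence of $\nabla w_h$ in every term of $a(\cdot,\cdot)$ and $\ell(\cdot)$, become indispensable; without them the local Neumann problem would be inconsistent.

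For the flux identity \eqref{eq:fluxed} I would sum \eqref{eq:bvp} over the three vertices $\xi \in v(\tau)$. On each interior dual edge the flux occurs twice with opposite outward normals and cancels, leaving only $\partial \tau$; at the same time $\sum_{\xi} \int_{t_\xi} f \ \text{d} \boldsymbol{x} = \int_\tau f \ \text{d} \boldsymbol{x}$ and the advective boundary terms assemble to $\int_{\partial \tau} u_h \boldsymbol{v} \cdot \boldsymbol{n} \ \text{d} l$. Rearranging gives $\int_{\partial \tau} ( -k \nabla \widetilde u_{\tau, h} + u_h \boldsymbol{v} ) \cdot \boldsymbol{n} \ \text{d} l = \int_\tau f \ \text{d} \boldsymbol{x}$. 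On the other hand, the divergence theorem applied to the exact flux together with the strong form $\nabla \cdot ( -k \nabla u + \boldsymbol{v} u ) = f$ of \eqref{pde} yields $\int_{\partial \tau} ( -k \nabla u + u \boldsymbol{v} ) \cdot \boldsymbol{n} \ \text{d} l = \int_\tau f \ \text{d} \boldsymbol{x}$. Equating the two right-hand sides delivers \eqref{eq:fluxed}. Notably this half requires neither $u_h = u$ nor $k$ constant: the post-processed and exact integrands genuinely differ, yet both total fluxes through $\partial \tau$ equal the same $\int_\tau f \ \text{d} \boldsymbol{x}$, which is what makes the elementwise identity hold.
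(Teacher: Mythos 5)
Your proposal is correct and follows essentially the same route as the paper's proof: compatibility is verified exactly as the paper does it, using $\sum_{\xi\in v(\tau)}\phi_\xi \equiv 1$ and $\nabla\big(\sum_{\xi\in v(\tau)}\phi_\xi\big)=0$ to get $\sum_{\xi} F^\xi = \int_\tau f \ \text{d}\boldsymbol{x}$ and $\sum_{\xi} Q^\xi = 0$, and the flux identity \eqref{eq:fluxed} then follows, as in the paper, by combining this with the strong form of \eqref{pde} and the divergence theorem (your summation of \eqref{eq:bvp} with interior dual-edge cancellation is just a reorganized version of the paper's use of \eqref{eq:bvpb}). The one point where you go beyond the paper is the sketch that the kernel is \emph{exactly} the constants; the paper simply subsumes this under the standard Neumann-problem citation, and note that your claimed lower bound by $k_{\min}\|\nabla w\|_{L^2(\tau)}^2$ is not an obvious inequality but rests on the Bank--Rose-type identity equating the vertex-centered finite volume and finite element stiffness matrices on the barycentric dual mesh, which holds exactly for elementwise constant $k$ and only approximately (for small $h_\tau$) otherwise.
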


\begin{proof}
The existence and the uniqueness of the solution are established by verifying the compatibility condition \cite{evans2010partial}. To verify the compatibility condition, we calculate 
\begin{equation*}
\int_{\partial\tau} \widetilde g_\tau \ \text{d} l = \int_{\partial \tau} ( - k \nabla \widetilde u_{\tau, h}  +  u_h \boldsymbol{v} ) \cdot \boldsymbol{n}  \ \text{d} l 
= \sum_{\xi\in v(\tau)} F^\xi - \sum_{\xi\in v(\tau)} Q^\xi.
\end{equation*}

\noindent
By using \eqref{eq:supg}, $\sum_{\xi\in v(\tau)} \phi_\xi = 1$, and $\nabla \big( \sum_{\xi\in v(\tau)} \phi_\xi \big) = 0$, we obtain
\begin{equation*}
\begin{aligned}
\sum_{\xi\in v(\tau)} F^\xi & = \sum_{\xi\in v(\tau)} \int_\tau f \big(  \phi_\xi + \delta \boldsymbol{v}  \cdot \nabla \phi_\xi  \big) \ \text{d} \boldsymbol{x} \\
& = \int_\tau f \Big( \sum_{\xi\in v(\tau)} \phi_\xi + \delta \boldsymbol{v}  \cdot \nabla ( \sum_{\xi\in v(\tau)} \phi_\xi )  \Big) \ \text{d} \boldsymbol{x} \\
& = \int_\tau f \ \text{d} \boldsymbol{x},
\end{aligned} 
\end{equation*}
and
\begin{equation*}
\begin{aligned}
\sum_{\xi\in v(\tau)} Q^\xi & = \sum_{\xi\in v(\tau)} \int_\tau \Big( ( k \nabla u_h - u_h \boldsymbol{v} ) \cdot \nabla \phi_\xi + \delta  \big(  \nabla \cdot ( -k\nabla u_h + \boldsymbol{v} u_h ) \big)  ( \boldsymbol{v}  \cdot \nabla \phi_\xi) \Big) \ \text{d} \boldsymbol{x} \\
& = \int_\tau \Big( ( k \nabla u_h - u_h \boldsymbol{v} ) \cdot \nabla \big( \sum_{\xi\in v(\tau)} \phi_\xi \big) + \delta  \big(  \nabla \cdot ( -k\nabla u_h + \boldsymbol{v} u_h ) \big)  \big( \boldsymbol{v}  \cdot \nabla \big( \sum_{\xi\in v(\tau)} \phi_\xi \big) \big) \ 
\Big)
\text{d} \boldsymbol{x} \\
& = 0.
\end{aligned} 
\end{equation*}
Thus the compatibility condition $\int_{\partial\tau} \widetilde g_\tau \ \text{d} l = \int_\tau f \ \text{d} \boldsymbol{x}$ is confirmed. As a byproduct, using \eqref{pde} and divergence theorem, we have the identity
\begin{equation*}
\begin{aligned}
\int_{\partial \tau} ( - k \nabla \widetilde u_{\tau, h}  +  u_h \boldsymbol{v} ) \cdot \boldsymbol{n} \ \text{d} l & = \int_{\partial\tau} \widetilde g_\tau \ \text{d} l = \int_\tau f \ \text{d} \boldsymbol{x}  = \int_\tau \nabla \cdot ( -k\nabla u + \boldsymbol{v} u )  \ \text{d} \boldsymbol{x} \\
& = \int_{\partial\tau } ( -k \nabla u + u \boldsymbol{v} ) \cdot \boldsymbol{n}  \ \text{d} l.
\end{aligned} 
\end{equation*}
This completes the proof.
\end{proof}

\subsection{Local Linear Algebra System } \label{sec:llas}
Next we describe the resulting linear system that is coming from
\eqref{eq:bvp}:
\begin{equation} \label{eq:locsys}
\begin{aligned}
- \int_{ \partial C^x \cap \partial t_x } k \nabla \widetilde u_{\tau, h} \cdot \boldsymbol{n}  \ \text{d} l & = Q^x - F^x + \int_{t_x} f \ \text{d} \boldsymbol{x} - \int_{ \partial C^x \cap \partial t_x  } u_h \boldsymbol{v} \cdot \boldsymbol{n}  \ \text{d} l, \\
- \int_{ \partial C^y \cap \partial t_y } k  \nabla \widetilde u_{\tau, h} \cdot \boldsymbol{n}  \ \text{d} l & = Q^y - F^y + \int_{t_y} f \ \text{d} \boldsymbol{x} - \int_{\partial C^y \cap \partial t_y } u_h \boldsymbol{v} \cdot \boldsymbol{n}  \ \text{d} l, \\
- \int_{ \partial C^z \cap \partial t_z } k \nabla \widetilde u_{\tau, h} \cdot \boldsymbol{n}  \ \text{d} l & = Q^z - F^z + \int_{t_z} f \ \text{d} \boldsymbol{x} - \int_{\partial C^z \cap \partial t_z} u_h \boldsymbol{v} \cdot \boldsymbol{n}  \ \text{d} l. \\
\end{aligned}
\end{equation}
Since $\widetilde u_{\tau, h}$ is represented as
\begin{equation} \label{eq:ppsol}
\widetilde u_{\tau, h} = \alpha_x \phi_x + \alpha_y \phi_y + \alpha_z \phi_z,
\end{equation}
we use it in \eqref{eq:locsys} to get a three dimensional linear system
$$
A \boldsymbol{\alpha} = \boldsymbol{b},
$$
where $\boldsymbol{\alpha} = (\alpha_x, \alpha_y, \alpha_z)^\top $,
$ \boldsymbol{b} = (b_x, b_y, b_z)^\top $ with 
$$
b_\xi = Q^\xi - F^\xi + \int_{t_\xi} f \ \text{d} \boldsymbol{x} - \int_{ \partial C^\xi \cap \partial t_\xi  } u_h \boldsymbol{v} \cdot \boldsymbol{n}  \ \text{d} l \quad \forall \ \xi\in v(\tau),
$$
and
$$
A_{\xi\eta} = -\int_{ \partial C^\xi \cap \partial t_\xi  } k \nabla \phi_\eta \cdot \boldsymbol{n} \ \text{d} l \quad \forall\ \xi, \eta \in v(\tau).
$$
As discussed Section \ref{sec:bvp}, the solution of \eqref{eq:bvp} is unique up to a constant, thus this system is singular and there are infinitely many solutions. However, since the desired final outcome is flux, which is a derivative information, it will be unique.

\subsection{Local Conservation}
\noindent
In Sections \ref{sec:bvp} and \ref{sec:llas}, the local auxiliary Neumann problem and its corresponding linear system are proposed with the hope of obtaining the required conservation on control volumes. The following lemma verifies the conservation property \eqref{eq:cvconservation} on control volumes whose edge does not overlap the global boundary.

\begin{lem}  \label{lem:localconserv}
The desired local conservation property \eqref{eq:cvconservation} on the control volume $C^z$ where $z\in Z_{\text{in}}$ is satisfied.
\end{lem}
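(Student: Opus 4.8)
The plan is to reduce the global balance \eqref{eq:cvconservation} on $C^z$ to a sum of the elementwise balances already encoded in the local system \eqref{eq:locsys}, and then to collapse the cross terms using the Galerkin identity \eqref{eq:QF}. The first thing I would do is fix the geometry. Since $z \in Z_{\text{in}}$, the support is $\Omega^z = \cup_{i=1}^N \tau_i^z$ and the control volume decomposes as $C^z = \cup_{i=1}^N t_z^{(i)}$, where $t_z^{(i)}$ is the quadrilateral associated with $z$ inside $\tau_i^z$. Because $z$ is interior, none of the dual segments meet $\partial\Omega$, so the half-edges lying on the triangle sides ($\partial\tau \cap \partial t_z^{(i)}$) are shared by neighboring quadrilaterals and cancel, leaving
$$\partial C^z = \bigcup_{i=1}^N \big( \partial C^z \cap \partial t_z^{(i)} \big).$$

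Next I would read off the elemental flux balance from \eqref{eq:locsys}. On each $\tau_i^z$, moving the advective boundary term to the left-hand side of the $z$-equation gives
$$\int_{\partial C^z \cap \partial t_z^{(i)}} \big( -k\nabla\widetilde u_{\tau, h} + u_h \boldsymbol{v} \big)\cdot\boldsymbol{n}\ \text{d}l = Q_i^z - F_i^z + \int_{t_z^{(i)}} f\ \text{d}\boldsymbol{x},$$
where $Q_i^z$ and $F_i^z$ are the element restrictions defined in \eqref{eq:Q}. Recognizing the integrand on the left as $\widetilde{\boldsymbol{\nu}}_h\cdot\boldsymbol{n}$ and summing over $i=1,\dots,N$, the left-hand side assembles, by the geometric decomposition above, into $\int_{\partial C^z}\widetilde{\boldsymbol{\nu}}_h\cdot\boldsymbol{n}\ \text{d}l$.

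It then remains to treat the right-hand side. The forcing contributions combine to $\sum_{i=1}^N \int_{t_z^{(i)}} f\ \text{d}\boldsymbol{x} = \int_{C^z} f\ \text{d}\boldsymbol{x}$, and the remaining terms $\sum_{i=1}^N (Q_i^z - F_i^z)$ vanish by the fact \eqref{eq:QF}, which is exactly the statement that $u_h$ satisfies the Galerkin equation \eqref{eq:supgphi} tested against the interior basis function $\phi_z$. Combining these yields \eqref{eq:cvconservation} and finishes the argument.

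The step I expect to require the most care is not any computation but the geometric bookkeeping in the first paragraph: verifying that for an interior vertex every piece of $\partial C^z$ is of the form $\partial C^z \cap \partial t_z^{(i)}$ with the orientations of the outward normals matching between $t_z^{(i)}$ and $C^z$, so that the local identities assemble with the correct sign. This is also the place where the hypothesis $z \in Z_{\text{in}}$ is essential --- it both removes any contribution from $\partial\Omega$ and guarantees that $\phi_z \in V_h$, which is what makes \eqref{eq:QF} (hence the cancellation of $\sum Q_i^z - \sum F_i^z$) available.
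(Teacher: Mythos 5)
Your proof is correct and takes essentially the same route as the paper's: both isolate the $z$-equation of the local system \eqref{eq:locsys}, move the advective boundary term to the left-hand side, sum the resulting identity over the $N$ elements sharing the vertex $z$, and invoke \eqref{eq:QF} to cancel $\sum_{i}(Q_i^z - F_i^z)$, leaving exactly \eqref{eq:cvconservation}. Your added remarks on the geometric assembly of $\partial C^z$ and on why $z\in Z_{\text{in}}$ is needed (so that $\phi_z\in V_h$ and no piece of $\partial C^z$ lies on $\partial\Omega$) are left implicit in the paper but do not change the argument.
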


\begin{proof}
Let $\Omega^z = \cup_{i=1}^N \tau_i^z$ be the subdomain associated with vertex $z$. Using for instance the last equation in \eqref{eq:locsys}, we move the last term
on the right hand side into the left-hand side to yield
\begin{equation} \label{eq:locelemequ}
 \int_{ \partial C^z \cap \partial t_z } \big( - k\nabla \widetilde u_{\tau, h} \cdot \boldsymbol{n}  + u_h \boldsymbol{v} \cdot \boldsymbol{n}  \big) \ \text{d} l = Q^z - F^z + \int_{t_z} f \ \text{d} \boldsymbol{x}.
\end{equation}
Consider the $N$ elements that have $z$ as a vertex and these $N$ elements lead to $N$ equations  like \eqref{eq:locelemequ}. By summing up these $N$ equations left-hand side by left-hand side and right-hand side by right-hand side and using \eqref{eq:QF}, we then have
\begin{equation*}
 \int_{ \partial C^z } \big( - k \nabla \widetilde u_{\tau, h} \cdot \boldsymbol{n}  + u_h \boldsymbol{v} \cdot \boldsymbol{n} \big) \ \text{d} l = \sum_{j=1}^N Q^z_j - \sum_{j=1}^N F^z_j + \int_{C^z} f \ \text{d} \boldsymbol{x}  = \int_{C^z} f \ \text{d} \boldsymbol{x}.
\end{equation*}
This confirms \eqref{eq:cvconservation}, i.e, the local conservation is satisfied on the control volume $C^z.$
\end{proof}

\section{An Error Analysis for the post-processing}  \label{sec:ana}
In this section, we focus on establishing a convergence property of the post-processing solution $\widetilde u_{\tau, h}$ in $H^1$ semi-norm by following the analysis framework in \cite{bush2013application}. We start with some general properties in the linear finite element space $V(\tau)$ and then establish a lemma by using those properties. In \cite{bush2013application}, the $H^1$ semi-norm error of the post-processed solution is bounded by mainly dealing with the diffusion term while here to establish the bound of the error, we need to deal with both the diffusion and advection terms. Lemma \ref{lem:elemerr} established the boundedness of the error, followed by the main theorem for the post-processed solution. We establish the convergence property on the post-processing for SUPG, which naturally implies the convergence property of the post-processing for CGFEM.

Let $V(\tau) = \text{span}\{ \phi_z, z \in v(\tau)\}$ be the linear finite element space over the element $\tau$ and $V^0(\tau)$ be the space of piecewise constant functions on element $\tau$ such that $V^0(\tau) = \text{span} \{ \psi_\xi \}_{\xi \in v(\tau)}$ with 
$$
\psi_\xi(\boldsymbol{x}) = \Big\lbrace\begin{array}{c} 1 \quad \text{if} \quad \boldsymbol{x} \in t_\xi  \\ 0 \quad \text{if} \quad \boldsymbol{x} \notin t_\xi \end{array}.
$$

\noindent
Define a map $I_\tau: V(\tau) \rightarrow V^0(\tau)$.  This map has a property \cite{chou2000error, chatzipantelidis2002finite} that 

\begin{equation} \label{eq:interr}
\| w - I_\tau w \|_{L^2(\tau)} \leq C h_\tau \| \nabla w \|_{L^2(\tau)}, \quad \text{for} \quad w \in V(\tau).
\end{equation}
Given $w\in V(\tau)$, we multiply \eqref{eq:bvp} by $w_\xi$ (the nodal value of $w$ at $\xi$) and sum over $\xi \in v(\tau)$ to yield 

\begin{equation} \label{eq:bvpsum}
-\sum_{\xi\in v(\tau)} \int_{\partial t_\xi} k \nabla \widetilde u_{\tau, h}  \cdot \boldsymbol{n} I_\tau w \ \text{d} l = \int_\tau f I_\tau w \ \text{d} \boldsymbol{x} -  \sum_{\xi\in v(\tau)} \int_{\partial t_\xi } u_h \boldsymbol{v} \cdot \boldsymbol{n}    I_\tau w \ \text{d} l
\end{equation}
and from \eqref{eq:bvpb} we get

\begin{equation} \label{eq:bvpbsum}
\int_{\partial \tau} ( - k \nabla \widetilde u_{\tau, h} \cdot \boldsymbol{n} + u_h \boldsymbol{v} \cdot \boldsymbol{n} ) I_\tau w \ \text{d} l  = \int_{\partial \tau} \widetilde g_\tau  I_\tau w \ \text{d} l,
\end{equation}
where
\begin{equation} \label{eq:poly}
\widetilde g_\tau = \sum_{\xi \in v(\tau)} ( F^\xi - Q^\xi ) P_\xi,
\end{equation}
and $P_\xi$ are polynomials of degree two in $\tau$ satisfying
\begin{equation*}
\int_{\partial\tau} P_\xi \psi_\eta \ \text{d} l = \delta_{\xi \eta}  \qquad \text{and}  \qquad \int_{\partial\tau} P_\xi \phi_\eta \ \text{d} l = \delta_{\xi \eta},
\end{equation*}
with $ \delta_{\xi \eta}$ the usual Kronecker delta. 
An example of such a polynomial is given in \cite{bush2013application}.
With these properties, it is easy to verify that $\widetilde{g}_\tau$ satisfies the boundary condition \eqref{eq:bvpb}.

\begin{lem} \label{lem:link}
Assume $w\in V(\tau)$ and $\widetilde g_\tau $ is defined in \eqref{eq:poly}. Then
\begin{equation} \label{eq:pass1}
\begin{aligned}
\int_{\partial \tau} ( k \nabla u \cdot \boldsymbol{n} - u \boldsymbol{v} \cdot \boldsymbol{n} + \widetilde g_\tau) w \ \text{d} l  = a_\tau(u - u_h, w) \\
\end{aligned}
\end{equation}
and
\begin{equation} \label{eq:pass2}
\int_{\partial \tau} \widetilde g_\tau( w - I_\tau w)  \ \text{d} l = 0,
\end{equation}
\end{lem}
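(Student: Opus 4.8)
The plan is to establish the two identities separately, dealing first with the purely algebraic relation \eqref{eq:pass2}, which follows directly from the biorthogonality of the polynomials $P_\xi$, and then with the more substantive relation \eqref{eq:pass1}, which is where the PDE and integration by parts enter.

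For \eqref{eq:pass2} I would expand $w$ in nodal form as $w = \sum_{\eta\in v(\tau)} w_\eta \phi_\eta$ and use the natural nodal identification $I_\tau w = \sum_{\eta\in v(\tau)} w_\eta \psi_\eta$, so that $w - I_\tau w = \sum_{\eta\in v(\tau)} w_\eta(\phi_\eta - \psi_\eta)$. Substituting the definition \eqref{eq:poly} of $\widetilde g_\tau$ and interchanging the finite sums with the line integral, the left-hand side becomes $\sum_{\xi,\eta} (F^\xi - Q^\xi)\, w_\eta \int_{\partial\tau} P_\xi(\phi_\eta - \psi_\eta)\,\text{d} l$. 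The two defining relations $\int_{\partial\tau} P_\xi \phi_\eta\,\text{d} l = \delta_{\xi\eta}$ and $\int_{\partial\tau} P_\xi \psi_\eta\,\text{d} l = \delta_{\xi\eta}$ make each such integral vanish, so \eqref{eq:pass2} follows at once.

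For \eqref{eq:pass1} I would first reduce the boundary integral $\int_{\partial\tau} \widetilde g_\tau\, w\,\text{d} l$. Writing $w = \sum_\eta w_\eta \phi_\eta$ and applying only the relation $\int_{\partial\tau} P_\xi \phi_\eta\,\text{d} l = \delta_{\xi\eta}$ collapses it to $\sum_{\xi\in v(\tau)} (F^\xi - Q^\xi) w_\xi$. Recalling $F^\xi = \ell_\tau(\phi_\xi)$ and $Q^\xi = a_\tau(u_h, \phi_\xi)$ and using linearity of $\ell_\tau$ and of $a_\tau(u_h,\cdot)$ in the test argument, this equals $\ell_\tau(w) - a_\tau(u_h, w)$. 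Hence the claim \eqref{eq:pass1} is equivalent to the single identity $\int_{\partial\tau}(k\nabla u - u\boldsymbol{v})\cdot\boldsymbol{n}\, w\,\text{d} l + \ell_\tau(w) = a_\tau(u, w)$, which must now be proved for the exact solution $u$.

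To obtain that identity I would apply the divergence theorem to the Galerkin part of $a_\tau(u,w)$. With the flux written as $\boldsymbol{\nu} = -k\nabla u + u\boldsymbol{v}$, integration by parts gives $\int_\tau (k\nabla u - u\boldsymbol{v})\cdot\nabla w\,\text{d}\boldsymbol{x} = \int_{\partial\tau}(k\nabla u - u\boldsymbol{v})\cdot\boldsymbol{n}\, w\,\text{d} l + \int_\tau (\nabla\cdot\boldsymbol{\nu})\, w\,\text{d}\boldsymbol{x}$, and the strong form \eqref{pde} identifies $\nabla\cdot\boldsymbol{\nu} = f$, producing the volume term $\int_\tau f w\,\text{d}\boldsymbol{x}$. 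For the stabilization part I would again invoke \eqref{pde} to replace $\nabla\cdot(-k\nabla u + \boldsymbol{v} u)$ by $f$, so that it becomes precisely $\int_\tau \delta f (\boldsymbol{v}\cdot\nabla w)\,\text{d}\boldsymbol{x}$. The two volume contributions then assemble into $\ell_\tau(w)$, giving $a_\tau(u,w) = \int_{\partial\tau}(k\nabla u - u\boldsymbol{v})\cdot\boldsymbol{n}\, w\,\text{d} l + \ell_\tau(w)$; combining this with the reduction of the previous paragraph and regrouping yields \eqref{eq:pass1}.

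The only delicate point is the integration by parts on the exact solution: it requires enough regularity (say $u \in H^2(\tau)$) for $\nabla\cdot\boldsymbol{\nu}$ and the boundary traces to be meaningful, which is the standing assumption here. The feature that keeps the SUPG case no harder than CGFEM is that the exact solution has zero strong residual, so the stabilization terms on the two sides cancel identically rather than leaving a genuine residual; consequently the same computation simultaneously covers $\delta = 0$ (CGFEM) and $\delta \neq 0$ (SUPG).
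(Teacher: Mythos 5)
Your proposal is correct and follows essentially the same route as the paper's proof: both rest on the biorthogonality relations $\int_{\partial\tau} P_\xi \phi_\eta \, \text{d}l = \delta_{\xi\eta}$ and $\int_{\partial\tau} P_\xi \psi_\eta \, \text{d}l = \delta_{\xi\eta}$ to reduce the boundary integrals to $\ell_\tau(w) - a_\tau(u_h,w)$, and on the elementwise integration by parts of the PDE together with the vanishing strong residual of the exact solution (so the stabilization term becomes $\int_\tau \delta f\, \boldsymbol{v}\cdot\nabla w\, \text{d}\boldsymbol{x}$) to identify the result with $a_\tau(u-u_h,w)$. The only cosmetic differences are that you prove \eqref{eq:pass2} by cancelling the two Kronecker deltas in one step rather than computing $\int_{\partial\tau}\widetilde g_\tau I_\tau w\,\text{d}l$ separately, and that you derive the key identity starting from $a_\tau(u,w)$ rather than from the strong form as in the paper's \eqref{eq:pdeint}.
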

\noindent
where $u$ is the true solution and $u_h$ is the solution of \eqref{eq:supg}.

\begin{proof}
Let $w\in V(\tau)$ so that $w = \displaystyle \sum_{\eta \in v(\tau)} w_\eta \phi_\eta$.
With $\widetilde{g}_\tau$ as defined in \eqref{eq:poly}, we calculate
\begin{equation*}
\begin{aligned}
\int_{\partial \tau} \widetilde g_\tau w \ \text{d} l & = \int_{\partial \tau} \widetilde g_\tau \Big(\sum_{\eta \in v(\tau)} w_\eta \phi_\eta \Big) \ \text{d} l
= \sum_{\eta \in v(\tau)} w_\eta  \int_{\partial \tau} \widetilde g_\tau \phi_\eta \ \text{d} l  \\
& = \sum_{\xi \in v(\tau)} ( F^\xi - Q^\xi ) \sum_{\eta \in v(\tau)} w_\eta  \int_{\partial \tau} P_\xi  \phi_\eta \ \text{d} l  \\
& = \sum_{\eta \in v(\tau)} w_\eta F^\eta - \sum_{\eta \in v(\tau)} w_\eta Q^\eta \\
& = \sum_{\eta \in v(\tau)} w_\eta \int_\tau f \big(\phi_\eta + \delta \boldsymbol{v}  \cdot \nabla \phi_\eta \big) \ \text{d} \boldsymbol{x}  \\
& \quad - \sum_{\eta \in v(\tau)} w_\eta \int_\tau \Big( (k \nabla u_h - u_h \boldsymbol{v}) \cdot \nabla \phi_\eta + \delta  \big(  \nabla \cdot ( -k\nabla u_h + \boldsymbol{v} u_h ) \big)  ( \boldsymbol{v}  \cdot \nabla \phi_\eta)  \Big) \ \text{d} \boldsymbol{x}  \\
& = \int_\tau f \big(w + \delta \boldsymbol{v}  \cdot \nabla w \big) \ \text{d} \boldsymbol{x} \\
& \quad - \int_\tau \Big( ( k \nabla u_h - u_h \boldsymbol{v} ) \cdot \nabla w + \delta  \big(  \nabla \cdot ( -k\nabla u_h + \boldsymbol{v} u_h ) \big)  ( \boldsymbol{v}  \cdot \nabla w) \Big) \ \text{d} \boldsymbol{x} \\
& = \ell_\tau (w) - a_\tau(u_h, w).
\end{aligned}
\end{equation*}

\noindent
Now multiplying the PDE in \eqref{pde} by a test function $w\in V(\tau)$ and using integration by parts, we obtain
\begin{equation} \label{eq:pdeint}
\int_\tau (k \nabla u - u \boldsymbol{v} ) \cdot \nabla w \ \text{d} \boldsymbol{x} = \int_\tau f w \ \text{d} \boldsymbol{x} + \int_{\partial\tau} (k \nabla u - u \boldsymbol{v} ) \cdot \boldsymbol{n} w \ \text{d} l.
\end{equation}

\noindent
By using \eqref{eq:pdeint} and \eqref{pde}, we then have
\begin{equation*}
\begin{aligned}
\int_{\partial \tau} ( k \nabla u \cdot \boldsymbol{n} - u \boldsymbol{v} \cdot \boldsymbol{n} + \widetilde g_\tau) w \ \text{d} l  & = \int_{\partial \tau} (k \nabla u - u \boldsymbol{v} ) \cdot \boldsymbol{n} w \ \text{d} l  + \ell_\tau (w) - a_\tau(u_h, w) \\
& = a_\tau (u-u_h, w),
\end{aligned}
\end{equation*}

\noindent
which completes \eqref{eq:pass1}. Moreover since $I_\tau w = \displaystyle \sum_{\eta \in v(\tau)} w_\eta \psi_\eta$, we have
\begin{equation*}
\begin{aligned}
\int_{\partial \tau} \widetilde g_\tau I_\tau w \ \text{d} l & = \int_{\partial \tau} \widetilde g_\tau \sum_{\eta \in v(\tau)} w_\eta \psi_\eta  \ \text{d} l
= \sum_{\eta \in v(\tau)} w_\eta  \int_{\partial \tau} \widetilde g_\tau \psi_\eta \ \text{d} l  \\
& = \sum_{\xi \in v(\tau)} ( F^\xi - Q^\xi )  \sum_{\eta \in v(\tau)} w_\eta  \int_{\partial \tau} P_\xi  \psi_\eta \ \text{d} l  \\
& = \sum_{\eta \in v(\tau)} w_\eta F^\eta - \sum_{\eta \in v(\tau)} w_\eta Q^\eta \\
& = \sum_{\eta \in v(\tau)} w_\eta  \int_\tau f \big(\phi_\eta + \delta \boldsymbol{v}  \cdot \nabla \phi_\eta \big) \ \text{d} \boldsymbol{x}  \\
& \quad - \sum_{\eta \in v(\tau)} w_\eta \int_\tau  \Big( (k \nabla u_h - u_h \boldsymbol{v}) \cdot \nabla \phi_\eta + \delta  \big(  \nabla \cdot ( -k\nabla u_h + \boldsymbol{v} u_h ) \big)  ( \boldsymbol{v}  \cdot \nabla \phi_\eta) \Big) \ \text{d} \boldsymbol{x}  \\
& = \int_\tau f \big(w + \delta \boldsymbol{v}  \cdot \nabla w \big) \ \text{d} \boldsymbol{x} \\
& \quad - \int_\tau \Big( ( k \nabla u_h - u_h \boldsymbol{v} ) \cdot \nabla w + \delta  \big(  \nabla \cdot ( -k\nabla u_h + \boldsymbol{v} u_h ) \big)  ( \boldsymbol{v}  \cdot \nabla w) \Big) \ \text{d} \boldsymbol{x} \\
& = \ell_\tau (w) - a_\tau(u_h, w).
\end{aligned}
\end{equation*}

\noindent
Therefore
$$
\int_{\partial \tau} \widetilde g_\tau( w - I_\tau w)  \ \text{d} l = 0,
$$
which completes the proof.
\end{proof}

Now we are ready to present and prove the lemma and theorem for the error
of the post-processes solution in $H^1$ semi-norm. The tools for the proof mainly consist of triangle inequality, integration by parts, divergence theorem, and subtracting-adding a term.

\begin{lem}  \label{lem:elemerr}
Let $u$ be the solution of \eqref{pde} and $\widetilde u_{\tau,h}$ be the post-proccessed solution \eqref{eq:ppsol} on the element $\tau$. For sufficiently smooth $k$ in $\tau$ and sufficiently small $h_\tau$, the error $(u - \widetilde u_{\tau, h}) $ satisfies 
$$
\| \nabla( u - \widetilde u_{\tau, h} ) \|_{L^2(\tau)} \leq C_1 R_{1,\tau} + C_2 R_{2,\tau} + C_3R_{3,\tau},
$$

\noindent
where
\begin{equation*}
\begin{aligned}
R_{1,\tau} & = \| \nabla( u - w ) \|_{L^2(\tau)}, \\
R_{2,\tau} & = h_\tau \big(  \| f \|_{L^2(\tau)} +  \| \nabla w \|_{L^2(\tau)}  + \| \boldsymbol{v} \|_{L^2(\tau)} + \| \nabla \cdot (u_h\boldsymbol{v}) \|_{L^2(\tau)}  \big), \\
R_{3,\tau} & = \| \nabla(u - u_h) \|_{L^2(\tau)},\\
\end{aligned}
\end{equation*}
and $w \in V(\tau)$, and $C_1, C_2, C_3$ are constants independent of $h_\tau$.

\end{lem}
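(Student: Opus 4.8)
The plan is to follow the framework of \cite{bush2013application}: reduce the elementwise error to a discrete energy estimate on $V(\tau)$ and then route the auxiliary Neumann data into the residual $a_\tau(u-u_h,\cdot)$ through Lemma \ref{lem:link}. First I would fix an arbitrary $w\in V(\tau)$ and split $u-\widetilde u_{\tau,h}=(u-w)+(w-\widetilde u_{\tau,h})$; the triangle inequality peels off $\|\nabla(u-w)\|_{L^2(\tau)}=R_{1,\tau}$ and leaves $\chi:=w-\widetilde u_{\tau,h}\in V(\tau)$ as the quantity to control. Since $\chi$ is linear on $\tau$ its gradient is constant, and the uniform ellipticity $k\ge k_{\min}>0$ gives coercivity $k_{\min}\|\nabla\chi\|_{L^2(\tau)}^2\le\int_\tau k\,\nabla\chi\cdot\nabla\chi\,\text{d}\boldsymbol{x}$, so the whole problem reduces to estimating this diffusion energy and cancelling one factor of $\|\nabla\chi\|_{L^2(\tau)}$ at the very end.

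Next I would write $\int_\tau k\,\nabla\chi\cdot\nabla\chi\,\text{d}\boldsymbol{x}=\int_\tau k\,\nabla(w-u)\cdot\nabla\chi\,\text{d}\boldsymbol{x}+\int_\tau k\,\nabla(u-\widetilde u_{\tau,h})\cdot\nabla\chi\,\text{d}\boldsymbol{x}$. The first integral is bounded by $k_{\max}R_{1,\tau}\|\nabla\chi\|_{L^2(\tau)}$ by Cauchy--Schwarz. For the second I would replace the element integral $\int_\tau k\,\nabla\widetilde u_{\tau,h}\cdot\nabla\chi\,\text{d}\boldsymbol{x}$ by the control-volume flux balance that $\widetilde u_{\tau,h}$ actually satisfies: testing the auxiliary problem with $\chi$ gives \eqref{eq:bvpsum} and \eqref{eq:bvpbsum} with $I_\tau\chi$ in place of $I_\tau w$. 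For the barycentric dual mesh the sum $-\sum_{\xi\in v(\tau)}\int_{\partial t_\xi}k\nabla\widetilde u_{\tau,h}\cdot\boldsymbol{n}\,I_\tau\chi\,\text{d}l$ equals $\int_\tau k\,\nabla\widetilde u_{\tau,h}\cdot\nabla\chi\,\text{d}\boldsymbol{x}$ up to a consistency error that, for smooth $k$, is of order $h_\tau\|\nabla\widetilde u_{\tau,h}\|_{L^2(\tau)}$; writing $\nabla\widetilde u_{\tau,h}=\nabla w-\nabla\chi$ bounds this by $C h_\tau(\|\nabla w\|_{L^2(\tau)}+\|\nabla\chi\|_{L^2(\tau)})$, the $h_\tau\|\nabla w\|$ contribution to $R_{2,\tau}$. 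The forcing and advective boundary integrals on the right of \eqref{eq:bvpsum} I would then convert to volume integrals by the divergence theorem.

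The true solution enters through the weak identity \eqref{eq:pdeint} tested with $\chi$, and through Lemma \ref{lem:link}: the boundary identity \eqref{eq:pass1} rewrites the $\partial\tau$ flux of $u$ together with $\widetilde g_\tau$ as $a_\tau(u-u_h,\chi)$, whose diffusion part contributes $k_{\max}R_{3,\tau}\|\nabla\chi\|_{L^2(\tau)}$ and thereby produces $R_{3,\tau}$. The remaining discrepancies all have the form ``$\chi$ versus $I_\tau\chi$'' and are handled by the interpolation estimate \eqref{eq:interr}, $\|\chi-I_\tau\chi\|_{L^2(\tau)}\le C h_\tau\|\nabla\chi\|_{L^2(\tau)}$: the term $\int_\tau f(\chi-I_\tau\chi)\,\text{d}\boldsymbol{x}$ yields the $h_\tau\|f\|$ piece, while the advective terms, after the divergence theorem exposes $\nabla\cdot(u_h\boldsymbol{v})$, yield the $h_\tau\|\nabla\cdot(u_h\boldsymbol{v})\|$ and $h_\tau\|\boldsymbol{v}\|$ pieces of $R_{2,\tau}$. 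Collecting everything, applying Cauchy--Schwarz termwise, absorbing the $h_\tau\|\nabla\chi\|_{L^2(\tau)}^2$ consistency contributions into the coercive left-hand side for $h_\tau$ small (which is where the smallness hypothesis is used), and cancelling one power of $\|\nabla\chi\|_{L^2(\tau)}$ gives the claimed bound with constants $C_1,C_2,C_3$ independent of $h_\tau$.

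The main obstacle, and the genuinely new part relative to the pure-diffusion analysis of \cite{bush2013application}, is the advective flux $u_h\boldsymbol{v}\cdot\boldsymbol{n}$ together with the SUPG stabilization term. These do not telescope against $\int_\tau k\,\nabla\widetilde u_{\tau,h}\cdot\nabla\chi\,\text{d}\boldsymbol{x}$ for free; one must apply the divergence theorem carefully on each subregion $t_\xi$ and then verify that the resulting consistency errors are truly $\mathcal{O}(h_\tau)$ in the data rather than $\mathcal{O}(1)$, which is exactly what forces $\|\boldsymbol{v}\|_{L^2(\tau)}$ and $\|\nabla\cdot(u_h\boldsymbol{v})\|_{L^2(\tau)}$ to appear in $R_{2,\tau}$. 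Controlling the stabilization contribution, which carries the second-order operator $\nabla\cdot(-k\nabla u_h+\boldsymbol{v}u_h)$, so that it does not degrade the order, and checking that every constant absorbs $k_{\max}$, $\|\boldsymbol{v}\|_{L^\infty}$ and $\delta$ but never $h_\tau$, is the delicate bookkeeping at the heart of the argument.
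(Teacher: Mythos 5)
Your proposal is correct and follows essentially the same route as the paper's proof: triangle inequality with an arbitrary $w\in V(\tau)$, coercivity in the element energy, testing the auxiliary Neumann problem against $I_\tau e_{\tau,h}$, invoking Lemma \ref{lem:link} to convert the $\partial\tau$ flux of $u$ into the residual $a_\tau(u-u_h,\cdot)$, interpolation estimate \eqref{eq:interr} for all $e_{\tau,h}-I_\tau e_{\tau,h}$ discrepancies, and absorption of the $h_\tau\Vert\nabla e_{\tau,h}\Vert^2$ terms for small $h_\tau$. The only slight inaccuracy is attributing the $h_\tau\Vert\boldsymbol{v}\Vert_{L^2(\tau)}$ piece of $R_{2,\tau}$ to the plain advective terms: in the paper it arises from the SUPG stabilization term $J_{44}=\int_\tau\delta(\boldsymbol{v}\cdot\nabla e_{\tau,h})\big(f-\nabla\cdot(-k\nabla u_h+\boldsymbol{v}u_h)\big)\,\text{d}\boldsymbol{x}$ under the standard choice $\delta=\mathcal{O}(h_\tau)$, which is precisely the ``bookkeeping'' you deferred.
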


\begin{proof}
By letting $e_{\tau, h} = w - \widetilde u_{\tau, h}$
and $w \in V(\tau)$, triangle inequality gives
\begin{equation} \label{eq:p0}
\| \nabla( u - \widetilde u_{\tau, h} ) \|_{L^2(\tau)} \leq \| \nabla( u - w ) \|_{L^2(\tau)} + \| \nabla( w - \widetilde u_{\tau, h} ) \|_{L^2(\tau)} =
R_{1,\tau} + \| \nabla e_{\tau,h} \|_{L^2(\tau)}. 
\end{equation}

\noindent
Using \eqref{eq:pdeint}, $e_{\tau, h} = (w - u) + (u - \widetilde u_{\tau, h})$ and $e_{\tau, h} = e_{\tau, h} - I_\tau e_{\tau, h} + I_\tau e_{\tau, h}$, we then get
\begin{equation} \label{eq:p1}
k_{\min, \tau} \| \nabla e_{\tau, h} \|^2_{L^2(\tau)} \leq \int_\tau k \nabla e_{\tau, h} \cdot \nabla e_{\tau, h} \ \text{d} \boldsymbol{x} = J_1 + J_2 + J_3,
\end{equation}

\noindent
where
\begin{equation*}
\begin{aligned}
J_1 & = \int_\tau k \nabla (w-u) \cdot \nabla e_{\tau, h} \ \text{d} \boldsymbol{x}, \\
J_2 & = \int_\tau f ( e_{\tau, h} - I_\tau e_{\tau, h} ) \ \text{d} \boldsymbol{x}, \\
J_3 & = \int_\tau f I_\tau e_{\tau, h} \ \text{d} \boldsymbol{x} + \int_{\partial\tau} ( k\nabla u - u\boldsymbol{v})  \cdot \boldsymbol{n} e_{\tau, h} \ \text{d} l + \int_\tau  u\boldsymbol{v} \cdot \nabla e_{\tau, h} \ \text{d} \boldsymbol{x} - \int_\tau k \nabla \widetilde u_{\tau, h}  \cdot \nabla e_{\tau, h} \ \text{d} \boldsymbol{x},\\
\end{aligned}
\end{equation*}

\noindent
and $k_{\min, \tau}$ is the minimum value of $k$ over $\tau$. The boundedness of $k$ and the Cauchy-Schwarz inequality give
\begin{equation} \label{eq:pJ1}
J_1 \leq k_{\max, \tau} \| \nabla (w-u) \|_{L^2(\tau)}  \| \nabla e_{\tau, h} \|_{L^2(\tau)}
= k_{\max, \tau} R_{1,\tau} \| \nabla e_{\tau, h} \|_{L^2(\tau)},
\end{equation}

\noindent
where $k_{\max, \tau}$ is the maximum of $k$ over $\tau$. Also by Cauchy-Schwarz inequality and \eqref{eq:interr} we get
\begin{equation} \label{eq:pJ2}
J_2 \leq \| f \|_{L^2(\tau)} \| e_{\tau, h} - I_\tau e_{\tau, h} \|_{L^2(\tau)}  \leq Ch_\tau \| f \|_{L^2(\tau)} \| \nabla e_{\tau, h} \|_{L^2(\tau)}.
\end{equation}

\noindent
To estimate $J_3$, we first use \eqref{eq:bvpsum} and Green's formula to write
the first term in $J_3$ as
\begin{equation} \label{eq:fj3}
\begin{aligned}
\int_\tau f I_\tau e_{\tau, h} \ \text{d} \boldsymbol{x} &=
 \sum_{\xi\in v(\tau)} \int_{\partial t_\xi} ( -k \nabla \widetilde u_{\tau, h}  + u_h \boldsymbol{v})\cdot \boldsymbol{n} I_\tau e_{\tau, h}  \ \text{d} l \\
&= \sum_{\xi\in v(\tau)} \int_{t_\xi} \nabla \cdot ( -k \nabla \widetilde u_{\tau, h}  + u_h \boldsymbol{v}) I_\tau e_{\tau, h}  \ \text{d} \boldsymbol{x} \\
&=\int_{\tau} \nabla \cdot ( -k \nabla \widetilde u_{\tau, h}  + u_h \boldsymbol{v}) I_\tau e_{\tau, h}  \ \text{d} \boldsymbol{x}.
\end{aligned}
\end{equation}
Next, from \eqref{eq:pass1}, \eqref{eq:pass2},
and \eqref{eq:bvpbsum} the second term in $J_3$ is expressed as
\begin{equation} \label{eq:tj3}
\begin{aligned}
\int_{\partial\tau} ( k\nabla u - u\boldsymbol{v})  \cdot \boldsymbol{n} e_{\tau, h} \ \text{d} l &= a_\tau(u-u_h, e_{\tau, h}) - \int_{\partial \tau}  \widetilde g_\tau I_\tau e_{\tau, h} \ \text{d} l \\
&= a_\tau(u-u_h, e_{\tau, h}) - 
\int_{\partial \tau} ( - k \nabla \widetilde u_{\tau, h} \cdot \boldsymbol{n} + u_h \boldsymbol{v} \cdot \boldsymbol{n} ) I_\tau e_{\tau, h} \ \text{d} l.
\end{aligned}
\end{equation}
Furthermore, we apply integration by parts to the last term in $J_3$ to get
\begin{equation} \label{eq:sj3}
- \int_\tau k \nabla \widetilde u_{\tau, h}  \cdot \nabla e_{\tau, h} \ \text{d} \boldsymbol{x} = \int_\tau \nabla \cdot (k \nabla \widetilde u_{\tau, h} ) e_{\tau, h} \ \text{d} \boldsymbol{x} - 
\int_{\partial \tau} k \nabla \widetilde u_{\tau, h}  \cdot \boldsymbol{n} e_{\tau, h} \ \text{d} l.
\end{equation}
Putting \eqref{eq:fj3}, \eqref{eq:tj3}, and \eqref{eq:sj3} back into $J_3$, gives
\begin{equation*}
\begin{aligned}
J_3 = J_{31} + J_{32},
\end{aligned}
\end{equation*}
where
\begin{equation*}
\begin{aligned}
J_{31} & = \int_\tau  \nabla \cdot ( k \nabla \widetilde u_{\tau, h})  ( e_{\tau, h} - I_\tau e_{\tau, h} )  \ \text{d} \boldsymbol{x},  \\
J_{32} & =  a_\tau(u-u_h, e_{\tau, h}) 
+ \int_\tau  u\boldsymbol{v} \cdot \nabla e_{\tau, h} \ \text{d} \boldsymbol{x}
+  \int_\tau \nabla \cdot ( \boldsymbol{v} u_h ) I_\tau e_{\tau, h} \ \text{d} \boldsymbol{x} - \int_{\partial\tau} k\nabla \widetilde u_{\tau, h} \cdot \boldsymbol{n} e_{\tau, h} \ \text{d} l \\
& \quad  - \int_{\partial \tau} ( - k \nabla \widetilde u_{\tau, h} \cdot \boldsymbol{n} + u_h \boldsymbol{v} \cdot \boldsymbol{n} ) I_\tau e_{\tau, h} \ \text{d} l. \\
\end{aligned}
\end{equation*}

\noindent
By assuming sufficient smoothness of $k$ and using triangle inequality yields
\begin{equation} \label{eq:pJ31}
\begin{aligned}
J_{31} & \leq \| \nabla k \|_{L^\infty(\tau)}  \| \nabla \widetilde u_{\tau, h} \|_{L^2{(\tau)}}  \| e_{\tau,h} - I_\tau e_{\tau,h} \|_{L^2{(\tau)}}  \\
& \leq Ch_\tau \| \nabla k \|_{L^\infty(\tau)}  \| \nabla \widetilde u_{\tau, h} \|_{L^2{(\tau)}}  \| \nabla e_{\tau, h} \|_{L^2{(\tau)}}  \\
& \leq Ch_\tau \| \nabla k \|_{L^\infty(\tau)}  \| \nabla w \|_{L^2{(\tau)}}  \| \nabla e_{\tau, h} \|_{L^2{(\tau)}} \\
& \quad + Ch_\tau \| \nabla k \|_{L^\infty(\tau)}  \| \nabla e_{\tau, h} \|^2_{L^2{(\tau)}}.
\end{aligned}
\end{equation}

\noindent
Expanding the first and second term in $J_{32}$ and applying integration by parts
and using \eqref{pde} gives
\begin{equation} \label{eq:fsj32}
\begin{aligned}
a_\tau(u-u_h, e_{\tau, h}) 
+ \int_\tau  u\boldsymbol{v} \cdot \nabla e_{\tau, h} \ \text{d} \boldsymbol{x}
&= \int_\tau k \nabla(u-u_h) \cdot \nabla e_{\tau,h} \, \text{d} \boldsymbol{x}
- \int_\tau \nabla \cdot (u_h \boldsymbol{v}) e_{\tau,h} \, \text{d} \boldsymbol{x}\\
&\quad+ \int_{\partial \tau} u_h \boldsymbol{v} \cdot \boldsymbol{n} e_{\tau,h} \, \text{d} l\\
&\quad+ \int_\tau \delta ( \boldsymbol{v}  \cdot \nabla e_{\tau, h}) \big( f - \nabla \cdot ( -k\nabla u_h + \boldsymbol{v} u_h ) \big)   \ \text{d} \boldsymbol{x}.
\end{aligned}
\end{equation}
Putting \eqref{eq:fsj32} back to $J_{32}$ and appropriately rearranging the terms give the following decomposition
\begin{equation*}
J_{32} =  J_{41} + J_{42} + J_{43} + J_{44},
\end{equation*}

\noindent
where
\begin{equation*}
\begin{aligned}
J_{41} & = \int_\tau k \nabla (u - u_h) \cdot \nabla e_{\tau, h} \ \text{d} \boldsymbol{x}, \\
J_{42} & = \int_{\partial\tau} ( -k \nabla \widetilde u_{\tau, h} + u_h \boldsymbol{v} ) \cdot \boldsymbol{n} ( e_{\tau, h} - I_\tau e_{\tau, h} ) \ \text{d} l, \\
J_{43} & = \int_\tau \nabla \cdot ( \boldsymbol{v} u_h ) ( I_\tau e_{\tau, h} - e_{\tau, h} ) \ \text{d} \boldsymbol{x},\\
J_{44} & = \int_\tau \delta ( \boldsymbol{v}  \cdot \nabla e_{\tau, h}) \big( f - \nabla \cdot ( -k\nabla u_h + \boldsymbol{v} u_h ) \big)   \ \text{d} \boldsymbol{x}.
\end{aligned}
\end{equation*}

\noindent
Cauchy-Schwarz inequality gives
\begin{equation} \label{eq:pJ41}
J_{41} \leq k_{\max, \tau} \| \nabla (u - u_h) \|_{L^2{(\tau)}}  \| \nabla e_{\tau, h} \|_{L^2{(\tau)}} = 
k_{\max, \tau} R_{3,\tau} \nabla e_{\tau, h} \|_{L^2{(\tau)}}.
\end{equation}

\noindent
Using Lemma 6.1 in \cite{chatzipantelidis2002finite} yields 
\begin{equation} \label{eq:pJ42}
\begin{aligned}
J_{42} & \leq Ch_\tau \big( \| \nabla k \|_{L^\infty(\tau)}  \| \nabla \widetilde u_{\tau, h} \|_{L^2{(\tau)}} + \| \nabla \cdot (u_h \boldsymbol{v} ) \|_{L^2{(\tau)}} \big) \| \nabla e_{\tau, h} \|_{L^2{(\tau)}}  \\
& \leq Ch_\tau \| \nabla k \|_{L^\infty(\tau)}  \| \nabla w \|_{L^2{(\tau)}}  \| \nabla e_{\tau, h} \|_{L^2{(\tau)}} + Ch_\tau \| \nabla k \|_{L^\infty(\tau)}  \| \nabla e_{\tau, h} \|^2_{L^2{(\tau)}} \\
& \quad + Ch_\tau \| \nabla \cdot (u_h \boldsymbol{v} ) \|_{L^2{(\tau)}} \| \nabla e_{\tau, h} \|_{L^2{(\tau)}} .
\end{aligned}
\end{equation}

\noindent
Using  Cauchy-Schwarz inequality and \eqref{eq:interr}, we get
\begin{equation} \label{eq:pJ43}
J_{43}  \leq Ch_\tau \| \nabla \cdot (u_h \boldsymbol{v} ) \|_{L^2{(\tau)}} \| \nabla e_{\tau, h} \|_{L^2{(\tau)}}.
\end{equation}

\noindent
By appropriately choosing $\delta$ (see \cite{tezduyar2000finite, tezduyar2003stabilization} for justification), we bound $J_{44}$ by
\begin{equation} \label{eq:pJ44}
J_{44} \leq Ch_\tau \| \boldsymbol{v} \|_{L^2(\tau)} \| \nabla e_{\tau, h} \|_{L^2{(\tau)}}.
\end{equation}

\noindent
Based on the above derivation, \eqref{eq:p1} can be rewritten as
\begin{equation*}
k_{\min, \tau} \| \nabla e_{\tau, h} \|^2_{L^2(\tau)} \leq J_1 + J_2 + J_{31} + J_{41} + J_{42} + J_{43} + J_{44}.
\end{equation*}
We collect the like-terms of all the estimates \eqref{eq:pJ1} for $J_1$, \eqref{eq:pJ2} for $J_2$, \eqref{eq:pJ31} for $J_{31}$, \eqref{eq:pJ41} for $J_{41}$, \eqref{eq:pJ42} for $J_{43}$, \eqref{eq:pJ43} for $J_{43}$, and \eqref{eq:pJ44} to give
\begin{equation*}
k_{\min, \tau} \| \nabla e_{\tau, h} \|^2_{L^2(\tau)} \leq \big( C_1 R_{1,\tau} + C_2 R_{2,\tau} + C_3R_{3,\tau} \big) \| \nabla e_{\tau, h} \|_{L^2(\tau)} + C h_\tau \| \nabla e_{\tau, h} \|^2_{L^2(\tau)}.
\end{equation*}
Choosing sufficiently small $h_\tau$, we can combine the last term on the right hand
side of the last inequality to the left hand side to get
\begin{equation*}
\| \nabla e_{\tau, h} \|_{L^2(\tau)} \leq \big( C_1 R_{1,\tau} + C_2 R_{2,\tau} + C_3R_{3,\tau} \big),
\end{equation*}
which is then combined with \eqref{eq:p0} to give the desired result. 
\end{proof}

\newtheorem{thm}{Theorem}[section]
\begin{thm} \label{thm:perr}
Assume $u$ is the solution of \eqref{pde} and $\widetilde u_h$ is the post-proccessed solution \eqref{eq:ppsol} satisfying \eqref{eq:bvp}-\eqref{eq:bvpb}, then we have
$$
\| \nabla ( u - \widetilde u_h) \|_{L^2(\Omega)} \leq Ch,
$$
where $C$ is a constant independent of $h$.
\end{thm}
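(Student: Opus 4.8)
The plan is to upgrade the element-local estimate of Lemma~\ref{lem:elemerr} to a global one by squaring, summing over all $\tau\in\mathcal{T}_h$, and then controlling each of the three resulting sums. First I would fix the free function $w\in V(\tau)$ appearing in Lemma~\ref{lem:elemerr} to be the nodal linear interpolant of $u$ on $\tau$, so that $R_{1,\tau}=\|\nabla(u-w)\|_{L^2(\tau)}$ becomes a genuine interpolation error. Since $\widetilde u_h$ is assembled element-by-element from the local solutions $\widetilde u_{\tau,h}$ and the $H^1$ seminorm is additive over the partition, squaring the conclusion of Lemma~\ref{lem:elemerr}, applying $(a+b+c)^2\le 3(a^2+b^2+c^2)$, and summing over $\tau$ yields
\begin{equation*}
\|\nabla(u-\widetilde u_h)\|^2_{L^2(\Omega)}
\le 3C_1^2\sum_{\tau}R_{1,\tau}^2
+3C_2^2\sum_{\tau}R_{2,\tau}^2
+3C_3^2\sum_{\tau}R_{3,\tau}^2.
\end{equation*}
It then suffices to show that each of the three sums is $\mathcal{O}(h^2)$.

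For the first sum, standard linear interpolation theory (assuming $u\in H^2(\Omega)$) gives $R_{1,\tau}\le Ch_\tau |u|_{H^2(\tau)}$, hence $\sum_\tau R_{1,\tau}^2\le Ch^2|u|^2_{H^2(\Omega)}$. For the second sum I would use $h_\tau\le h$ to extract a factor $h^2$ and then note that each of the four contributions to $R_{2,\tau}$ sums to a global quantity: $\sum_\tau\|f\|^2_{L^2(\tau)}=\|f\|^2_{L^2(\Omega)}$, $\sum_\tau\|\nabla w\|^2_{L^2(\tau)}=\|\nabla w\|^2_{L^2(\Omega)}$ (uniformly bounded by $|u|_{H^1(\Omega)}$ via interpolation stability), $\sum_\tau\|\boldsymbol{v}\|^2_{L^2(\tau)}=\|\boldsymbol{v}\|^2_{L^2(\Omega)}$, and $\sum_\tau\|\nabla\cdot(u_h\boldsymbol{v})\|^2_{L^2(\tau)}=\|\nabla\cdot(u_h\boldsymbol{v})\|^2_{L^2(\Omega)}$. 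The last quantity is finite and bounded independently of $h$, since $\nabla\cdot(u_h\boldsymbol{v})=\nabla u_h\cdot\boldsymbol{v}+u_h\,\nabla\cdot\boldsymbol{v}$ and the a~priori stability of the SUPG solution furnishes uniform control of $\|u_h\|_{H^1(\Omega)}$ together with the smoothness of $\boldsymbol{v}$. Therefore $\sum_\tau R_{2,\tau}^2\le Ch^2$.

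The third sum is precisely the global finite element gradient error, $\sum_\tau R_{3,\tau}^2=\|\nabla(u-u_h)\|^2_{L^2(\Omega)}$, and the decisive input is the standard a~priori estimate for the scheme that produces $u_h$: for linear elements and $u\in H^2(\Omega)$, both CGFEM (through C\'ea's lemma and interpolation) and the SUPG method (for an admissible $\delta$) satisfy $\|\nabla(u-u_h)\|_{L^2(\Omega)}\le Ch$, so $\sum_\tau R_{3,\tau}^2\le Ch^2$. Collecting the three bounds and taking the square root then delivers $\|\nabla(u-\widetilde u_h)\|_{L^2(\Omega)}\le Ch$.

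I expect the main obstacle to be the third sum, namely importing a clean $\mathcal{O}(h)$ $H^1$-seminorm bound for the finite element error $u-u_h$. For CGFEM this is routine, but for SUPG the natural a~priori estimate lives in the mesh-dependent SUPG norm and delivers order $h^{3/2}$ there; one must extract from its diffusive component the plain $\mathcal{O}(h)$ control of $\|\nabla(u-u_h)\|_{L^2(\Omega)}$, which relies on the regularity assumption $u\in H^2(\Omega)$ and on $\delta$ lying in the admissible range already invoked when bounding $J_{44}$. A secondary point is that the ``sufficiently small $h_\tau$'' hypothesis of Lemma~\ref{lem:elemerr} must hold uniformly; this is guaranteed as soon as the global mesh parameter $h$ is taken small enough.
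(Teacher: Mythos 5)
Your proposal is correct and follows essentially the same route as the paper's own proof: choose $w$ in Lemma~\ref{lem:elemerr} to be the nodal interpolant of $u$, square and sum the local estimate over all $\tau\in\mathcal{T}_h$, and control the three resulting global sums by interpolation theory, boundedness of the data terms, and the standard $\mathcal{O}(h)$ estimate $\|\nabla(u-u_h)\|_{L^2(\Omega)}\leq Ch\|u\|_{H^2(\Omega)}$ for CGFEM/SUPG. In fact you are somewhat more careful than the paper, which silently drops the $\|\nabla w\|_{L^2}$ contribution in its final display and does not comment on the $h$-uniform boundedness of $\|\nabla\cdot(u_h\boldsymbol{v})\|_{L^2(\Omega)}$ or on extracting the plain $H^1$ bound from the SUPG-norm estimate, points you address explicitly.
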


\begin{proof}
Let $w_h$ be the nodal interpolation of $u$ such that $w_h|_\tau = w \in V(\tau)$ as in Lemma \ref{lem:elemerr}. Then
\begin{equation*}
\begin{aligned}
\| \nabla ( u - \widetilde u_h) \|^2_{L^2(\Omega)} & = \sum_{\tau} \| \nabla ( u - \widetilde u_h) \|^2_{L^2(\tau)} \\
& \leq
C\Big( \sum_{\tau} R^2_{1, \tau} + \sum_{\tau} R^2_{2, \tau} + \sum_{\tau} R^2_{3, \tau} \Big) \\
&= C \Big( \Vert \nabla(u - w_h) \Vert^2_{L^2(\Omega)}
+ h^2 ( \Vert f \Vert^2_{L^2(\Omega)} + \Vert \boldsymbol{v} \Vert^2_{L^2(\Omega)}
+ \Vert \nabla \cdot (u_h \boldsymbol{v} \Vert^2_{L^2(\Omega)} )\\
&\quad+ \Vert \nabla(u - u_h) \Vert^2_{L^2(\Omega)} \Big).
\end{aligned}
\end{equation*}
Moreover, the SUPG/CGFEM solution $u_h$ satisfies $ \| u - u_h \|_{H^1(\Omega)} \leq Ch\| \|_{H^2(\Omega)}$ \cite{brenner2008mathematical, johnson1984finite, roos2008robust}. Also, interpolation $w_h$ has a standard property
that it converges to the first order with respect to $h$ to $u$
(see \cite{brenner2008mathematical}). All these combined give the desired result.
\end{proof}

\section{Numerical Experiments}\label{sec:num}

\noindent
To examine various aspects of the performance of the post-processing technique we consider mainly two different simulation scenarios. One of them is for studying the performance of the post-processing technique for continuous CGFEM for advection diffusion equation while the other is for studying the performance of the post-processing technique for SUPG for advection dominated diffusion equation. For these two different scenarios, we demonstrate that (1) we obtain conservative fluxes after applying the post-processing technique; (2) we confirm the convergence rates discussed in Section \ref{sec:ana} through examples in Section \ref{sec:cs}. Moreover, we numerically show the convergence behavior of the post-processed solution in terms of three different edge metrics. The second part of the experiment focuses on the performance of the post-processing technique applied to the numerical solutions for drift-diffusion equations in Section \ref{sec:drift}.

\subsection{Conservation Study} \label{sec:cs}

We consider three test problems in this section. In these three problems, we consider the domain $\Omega = (0, 1)^2$ and impose Dirichlet boundary conditions. In the implementation, the domain is discretized by using triangular elements. 

\textbf{Example 1.} Advection diffusion equation \eqref{pde} with $k=1, \boldsymbol{v} = (1, 1)^\top, g = 0, u = (x-x^2) (y-y^2),$ and $f$ is the function derived from \eqref{pde}.

\textbf{Example 2.} Advection dominated diffusion equation \eqref{pde} with $k = 0.01, \boldsymbol{v} = (1, 1)^\top, g = 0$,
$$
u = \Big( x - \frac{e^{ \frac{x} {k} } - 1}{e^{\frac{1} {k}} - 1 } \Big) \Big( y - \frac{e^{\frac{y} {k} - 1} }{e^{\frac{1} {k}} - 1 } \Big),
$$ 
and $f$ is the function derived from \eqref{pde}. The solution develops two boundary layers, one at the right and the other at the top side of the domain. 

\textbf{Example 3.} We consider the time-dependent advection dominated diffusion equation
\begin{equation}\label{tpde}
\begin{cases}
\begin{aligned}
\partial_t u + \nabla \cdot ( - k \nabla u + \boldsymbol{v} u ) & = f \quad \text{in} \quad \Omega\times(0, T)  \\
u & = 0 \quad \text{on} \quad \partial\Omega\times(0, T)  \\
u(\boldsymbol{x}, 0) & = u_0(\boldsymbol{x}), ~~\boldsymbol{x} \in \Omega,
\end{aligned}
\end{cases}
\end{equation}
with $k = 10^{-5}, \boldsymbol{v} = \big( y - 0.5 , 0.5 - x  \big)^\top, f = 0, g = 0$, and initial solution  		
\begin{equation*}
u_0(\boldsymbol{x}) = 
	\begin{cases}
	0, &  \text{if} \ (x - 0.25)^2 + (y - 0.5)^2 > r^2, \\
	1, &  \text{if} \ \ (x - 0.25)^2 + (y - 0.5)^2 \leq r^2.
	\end{cases}
\end{equation*}
The solution is the rotation and diffusion of an initial cylinder of height 1, radius $r = 0.2$ and center (0.25, 0.5). Since the diffusion coefficient $k$ is very small compared to the advection coefficient $\boldsymbol{v}$ ( the equation is advection dominated), we mainly see the rotation of the cylinder.  The time required for one complete revolution for the simulation is $T = 2 \pi$; see \cite{bochev2013new}.

To solve this problem numerically, we utilize SUPG formulation \eqref{eq:supg} for the spatial discretization  and apply backward Euler scheme for the temporal discretization of \eqref{tpde}. A description of SUPG for time-dependent advection diffusion reaction equations with small diffusion and proposals for the parameter $\delta$ are discussed in \cite{john2008finite}. We use a uniform partition for the time variable $0 = t^0 < t^1 < t^2 < \cdots < t^N = T.$ with $\Delta t = t^n - t^{n-1} = \frac{T}{N}$ and $u(t^0) = u(0) = u_0$ is the initial data. Given $u^{n-1}_h$, the backward Euler SUPG (BE-SUPG) is to  find $u^n_h \in V_h$ satisfying
\begin{equation} \label{eq:tsfem}
(u^n_h, w_h) + \Delta t ~a(u^n_h, w_h) = (u^{n-1}_h, w_h) + \Delta t ~\ell(w_h), \quad \forall \ w_h \in V_h.
\end{equation}
where $(\cdot, \cdot)$ is the usual $L^2(\Omega)$ inner product.
We discretize the domain $\Omega = (0, 1)^2$ into 128 squares in each direction
and each square is further divided into two triangles. The time step for the example is $\Delta t = 2\pi / 2000$ and the numerical solution completes one revolution in 2000 steps \cite{bochev2013new}. Figure \ref{fig:timed2} present snapshots of the BE-SUPG solutions of the example taken at $t=0, t=\pi/2, \pi, 3\pi/2,$ and $2\pi.$ They show how the initial cylinder of height 1, radius 0.2, and center (0.25, 0.5) is rotated under the given velocity field $\boldsymbol{v} = (y-0.5, 0.5-x)$ and the small diffusion $k = 1.0\times 10^{-5}.$ The artificial diffusion in SUPG contributes to the diffusion that we see in Figure \ref{fig:timed2}.

\begin{figure}
\centering
\includegraphics[width=4.5cm]{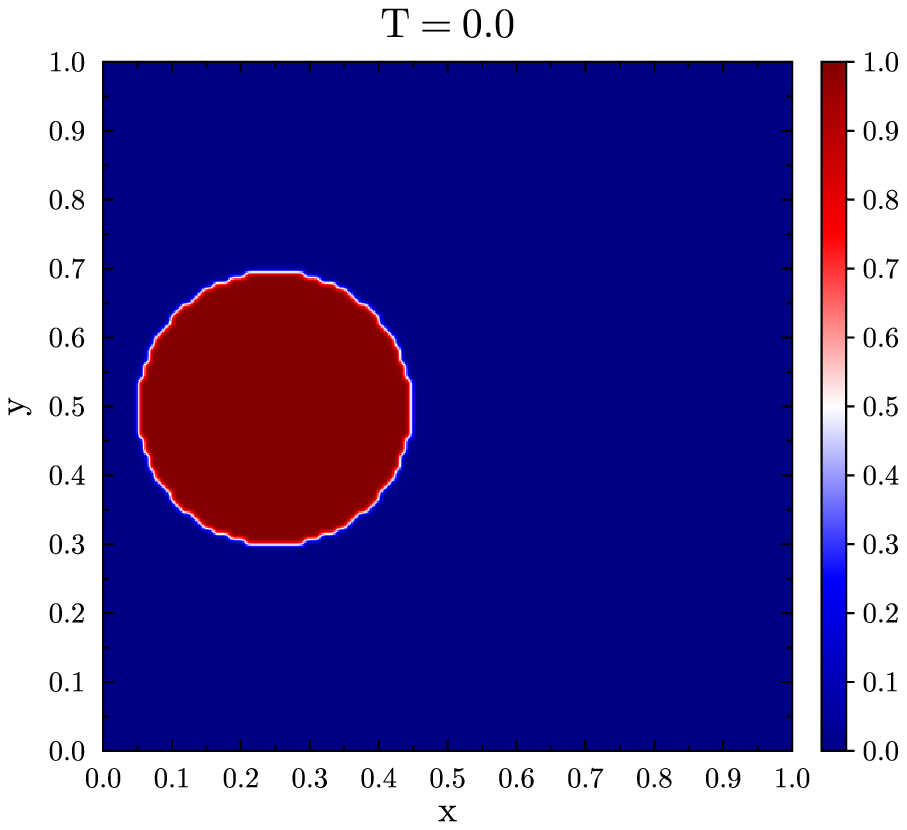} 
\includegraphics[width=4.5cm]{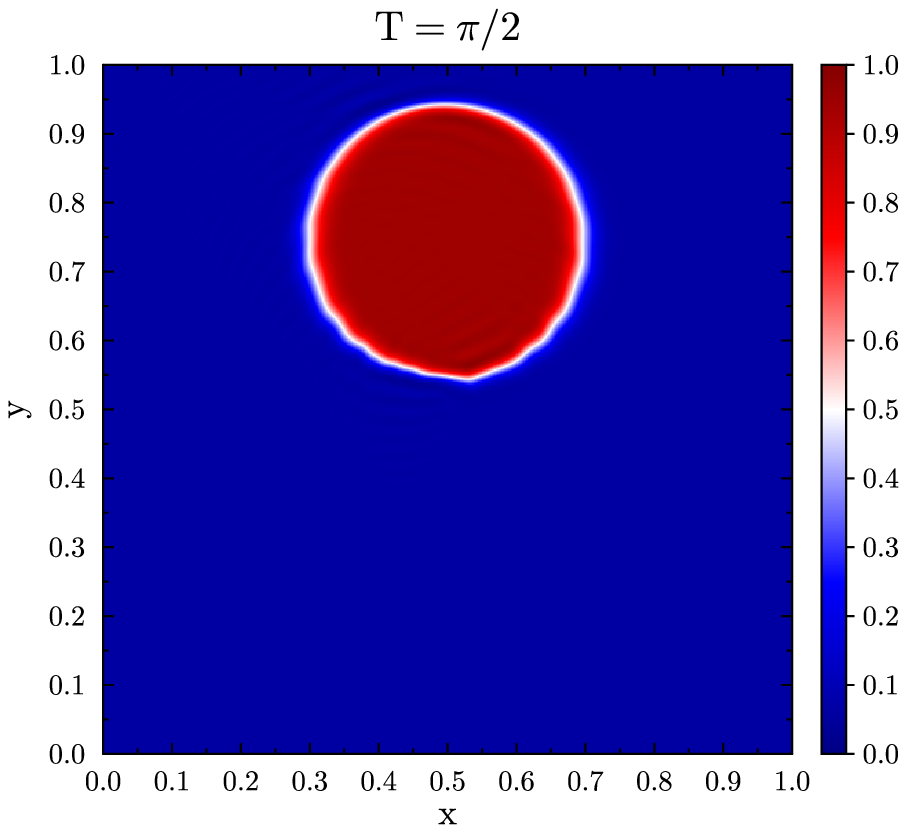} 
\includegraphics[width=4.5cm]{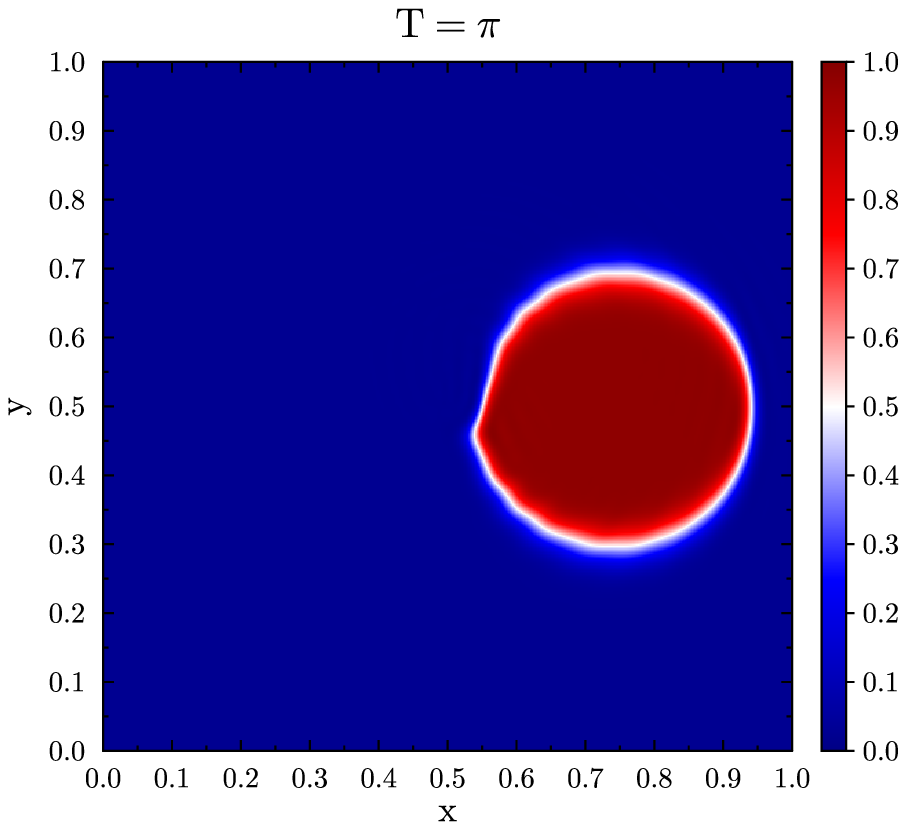} 
\includegraphics[width=4.5cm]{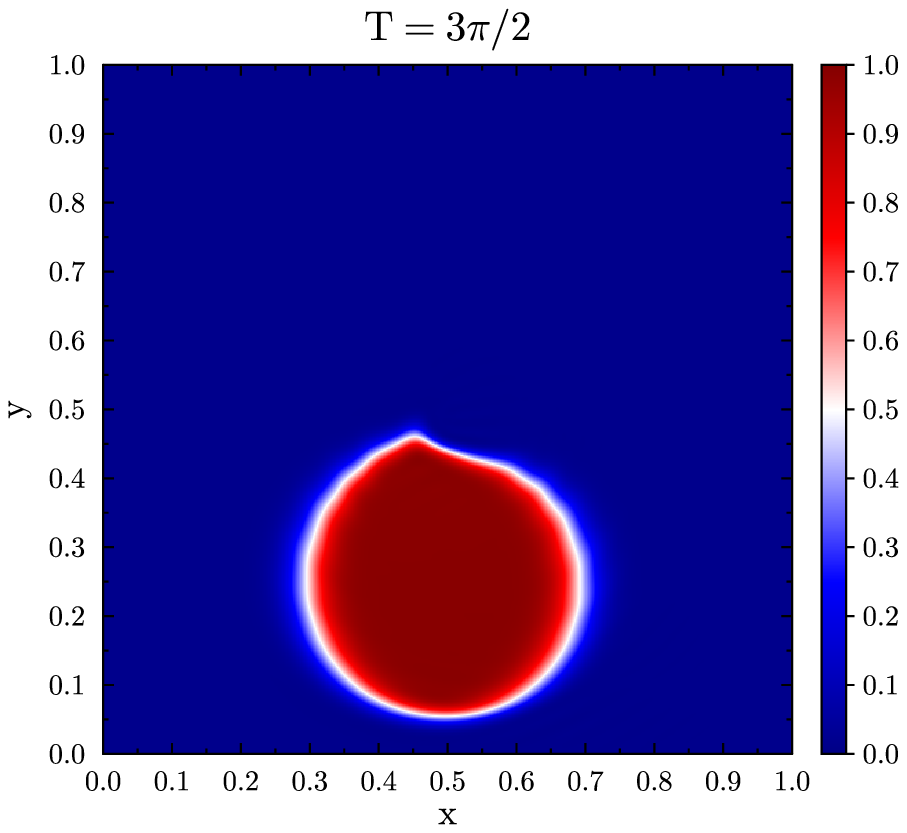} 
\includegraphics[width=4.5cm]{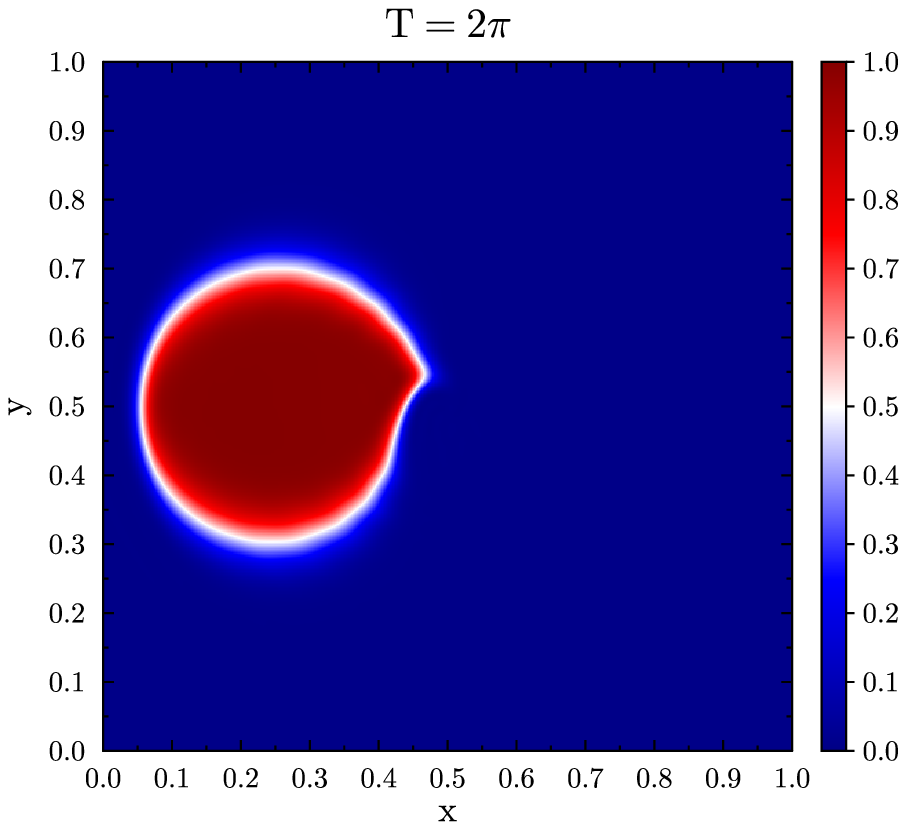} 
\caption{Contours of the BE-SUPG solutions of Example 3}
\label{fig:timed2}
\end{figure}

\noindent
The post-processing technique for BE-SUPG solutions to \eqref{tpde} is constructed in such a way that 
\begin{equation} \label{eq:tcvconservation}
\int_{\partial C^z} \widetilde {\boldsymbol{\nu}}_h \cdot \boldsymbol{n}  \ \text{d} l = \int_{C^z} ( f + \partial_t u_h ) \ \text{d} \boldsymbol{x}
\end{equation}
is satisfied.
The detailed procedure is a simple and natural extension of the procedures described in Section \ref{sec:pp}.

For the conservation study, we first consider the local conservation for the finite element solution and the post-processed solution. In this scenario, for Example 3, the local fluxes are collected at time $t=\pi.$
Figure \ref{fig:advncf} shows that without the post-processing technique, the local fluxes calculated directly from CGFEM, SUPG, and BE-SUPG solutions for Example 1, 2, and 3, respectively are not conservative since the errors are relatively large, where the error is defined as
\begin{equation}
\text{Error} = \int_{\partial C^z}  ( - k \nabla u_h \cdot \boldsymbol{n} + u_h \boldsymbol{v} \cdot \boldsymbol{n} ) \ \text{d} l - \int_{C^z} f \ \text{d} \boldsymbol{x},
\end{equation}
for Example 1 and Example 2 and
\begin{equation}
\text{Error} = \int_{\partial C^z}  ( - k \nabla u_h \cdot \boldsymbol{n} + u_h \boldsymbol{v} \cdot \boldsymbol{n} ) \ \text{d} l - \int_{C^z} (f + \partial_t u_h) \ \text{d} \boldsymbol{x},
\end{equation}
for Example 3.

\begin{figure}
\centering
\includegraphics[width=10.0cm]{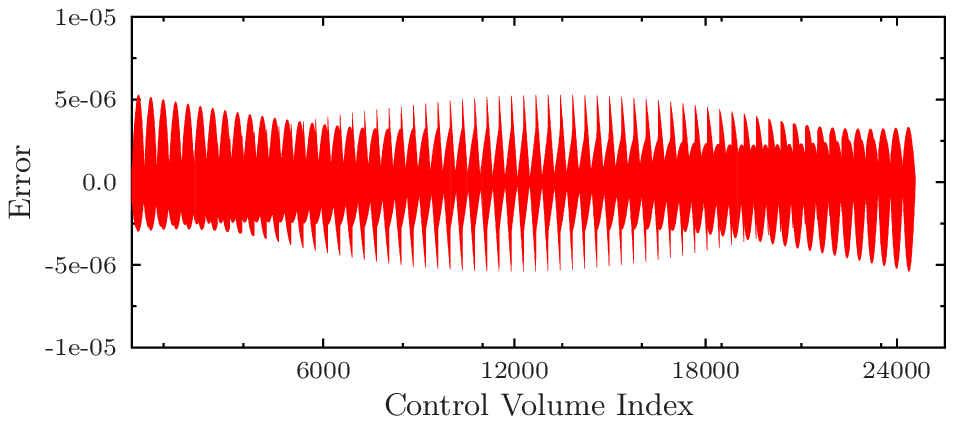}
\includegraphics[width=10.0cm]{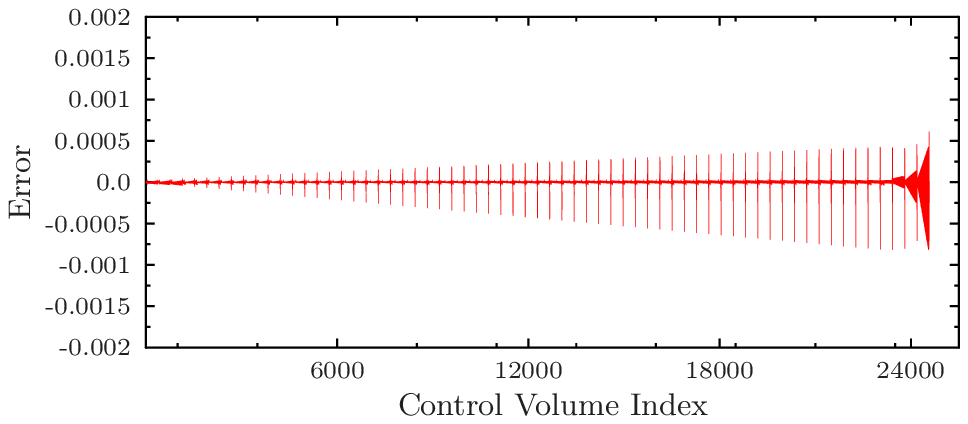}
\includegraphics[width=10.0cm]{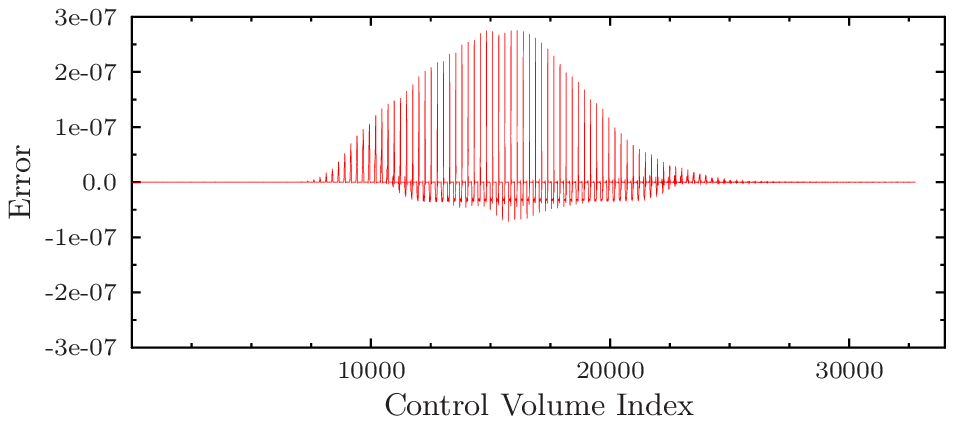}
\caption{Conservation errors without the post-processing technique for Example 1(top), Example 2(middle), and Example 3(bottom). }
\label{fig:advncf}
\end{figure}

After applying the post-processing technique, we have the local conservation. The conservation errors are of the scale of $10^{-18}$. Theoretically, the conservation errors after applying the post-processing technique should be zero according to the construction. These discrepancies are due to the accuracy of the linear algebra solver and the machine but are sufficient to verify the local conservation of the fluxes on each control volume.

Now for Example 1 and Example 2, we study the convergence rates. We first study the $H^1$ semi-norm of the CGFEM/SUPG solutions and the post-processed solutions. The results for Example 1 are shown in Figure \ref{fig:h1adverr1} while the results for Example 2 are shown in Figure \ref{fig:h1adverr2}. We solve Example 1 on $10\times 10, 20\times 20, 40\times 40, 80\times 80, 160\times  160,$ and $320\times 320$ uniform meshes while we solve Example 2  on $40\times 40, 80\times 80, 160\times  160,320\times 320, 640\times 640,$ and $1280\times 1280$ uniform meshes. In Figure \ref{fig:h1adverr1} and Figure \ref{fig:h1adverr2}, the plots on the left (red) show the $H^1$ semi-norm convergence order of the CGFEM for Example 1 and SUPG for Example 2. They are roughly 1's, which confirms the theoretical convergence rates of the numerical methods applied to both examples in literature \cite{brenner2008mathematical, johnson1984finite, roos2008robust}. The plots on right (blue) in Figure \ref{fig:h1adverr1} and Figure \ref{fig:h1adverr2} show the $H^1$ semi-norm errors of the post-processed solutions. Firstly, in both examples, the convergence rates in $H^1$ semi-norm of the post-processed solutions confirm the convergence analysis, i.e., Theorem \ref{thm:perr} in Section \ref{sec:ana}. Furthermore, we notice that the convergence rates in $H^1$ semi-norm of the post-processed solution is of the same order with the convergence rate in $H^1$ semi-norm of the finite element solution.

\begin{figure} 
\centering
\includegraphics[width=4.5cm]{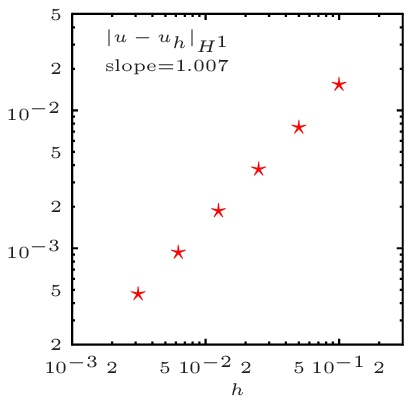}
\includegraphics[width=4.5cm]{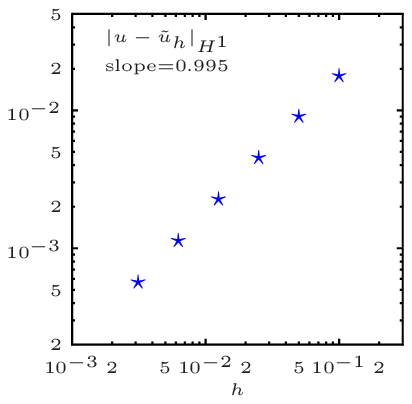}
\caption{$H^1$ semi-norm errors of the CGFEM solution and the post-processed solution for Example 1}
\label{fig:h1adverr1}
\includegraphics[width=4.5cm]{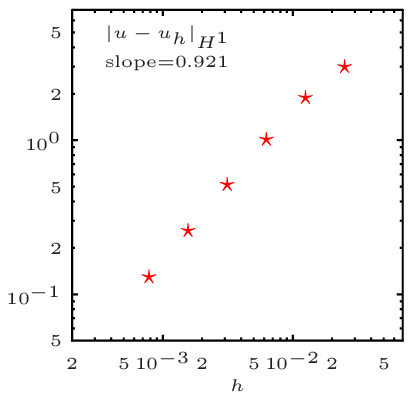}
\includegraphics[width=4.5cm]{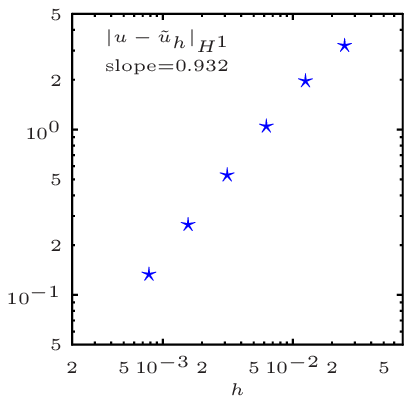}
\caption{$H^1$ semi-norm errors of the SUPG solution and the post-processed solution for Example 2}
\label{fig:h1adverr2}
\includegraphics[width=4.5cm]{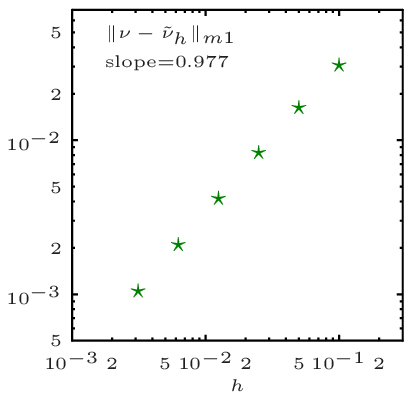}
\includegraphics[width=4.5cm]{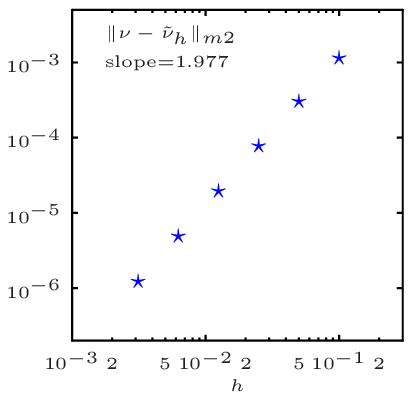}
\includegraphics[width=4.5cm]{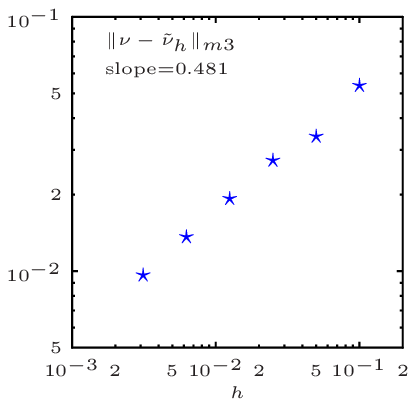}
\caption{Edge related metric errors of the post-processed solution for Example 1}
\label{fig:emadverr1}
\includegraphics[width=4.5cm]{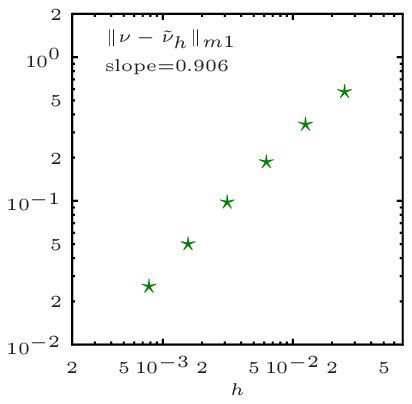}
\includegraphics[width=4.5cm]{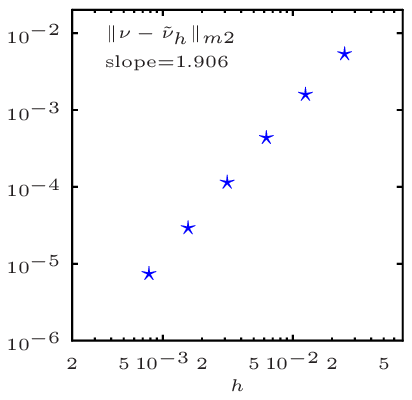}
\includegraphics[width=4.5cm]{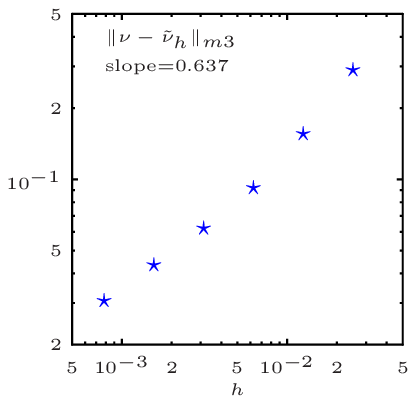}
\caption{Edge related metric errors of the post-processed solution for Example 2}
\label{fig:emadverr2}
\end{figure}

To further study the post-processed solutions, we collect the following errors which are calculated by taking the difference between the true fluxes and the post-processed fluxes at the boundary of each control volumes. Let $ \widetilde {\boldsymbol{\nu}}_{\tau, h} = -k \nabla \widetilde u_{\tau, h} + \boldsymbol{v} u_h $ be the post-processed velocity and $ \boldsymbol{\nu}_\tau = -k \nabla u + \boldsymbol{v} u $ be the true velocity on the element $\tau$ in the domain. Let $\mathcal{S}$ be the set of all the edges of all the control volumes and $e\in \mathcal{S}$ be an edge. We define the following edge related metric
$$
\| \boldsymbol{\nu} - \widetilde {\boldsymbol{\nu}}_h \|_{m1} = \max_{e\in \mathcal{S}} \| \widetilde {\boldsymbol{\nu}}_{\tau, h} \cdot \boldsymbol{n} - \boldsymbol{\nu}_\tau \cdot \boldsymbol{n}  \|_{L^\infty(e)}, 
$$
$$
\| \boldsymbol{\nu} - \widetilde {\boldsymbol{\nu}}_h \|_{m_2} = \max_{e\in \mathcal{S}} \Big{\|} \int_e ( \widetilde {\boldsymbol{\nu}}_{\tau, h} \cdot \boldsymbol{n} - \boldsymbol{\nu}_\tau \cdot \boldsymbol{n} ) \ \text{d} l \Big{\|}_{L^\infty(e)},
$$
and
$$
\| \boldsymbol{\nu} - \widetilde {\boldsymbol{\nu}}_h \|_{m3} = \Big( \displaystyle\sum_{e\in\mathcal{S}} \int_e \big( \widetilde {\boldsymbol{\nu}}_{\tau, h} \cdot \boldsymbol{n} - \boldsymbol{\nu}_\tau \cdot \boldsymbol{n}  \big)^2 \ \text{d} l \Big)^{1/2}.
$$

We collect the edge errors in these three different metrics and plot them to numerically show the convergence rates. These three metrics represent the errors of the post-processed normal flux at the boundaries of the control volumes in three different ways. The results are shown in Figure \ref{fig:emadverr1} and Figure \ref{fig:emadverr2}. They show that for both Example 1 and Example 2, the post-processed normal fluxes approaches its true counterpart in reasonable convergence rates as the mesh gets finer.

\subsection{Application to Drift-Diffusion Equations} \label{sec:drift}
Drift-Diffusion Equations is a coupled system of three equations of which one is an elliptic equation and the other two are advection dominated diffusion equations. It is a coupled system modelling the motion of electrons and holes in semiconductor materials \cite{mock1983analysis, markowich1986stationary}. The governing equations are
\begin{equation} \label{drift}
\begin{cases}
\begin{aligned}
- \nabla \cdot (\lambda^2 \nabla\psi) & = p - n + C \quad \text{in} \ \Omega \\
 \nabla \cdot ( - n \mu_n \nabla\psi + D_n \nabla n ) &= R(\psi, n, p)  \quad \text{in} \ \Omega \\
  \nabla \cdot ( p \mu_p \nabla\psi + D_p \nabla p ) & = R(\psi, n, p)   \quad \text{in} \ \Omega, \\
\end{aligned}
\end{cases}
\end{equation}

\noindent
with appropriate boundary conditions, where $\psi$ is the electric potential, $n$ and $p$ are carrier densities of the electrons and the holes, $\lambda$ is the minimal Debye length of the device, $D_n$ and $D_p$ are carrier's diffusivity, $\mu_n$ and $\mu_p$ are the carrier's mobilities, $R$ is the recombination term, and $C$ is a constant. The existence and uniqueness of the coupled system is discussed in \cite{mock1983analysis, gajewski1994uniqueness}. Numerical schemes for solving this coupled system need (1) stability when drift velocities dominate their diffusivity and (2) local conservation of electron and hole current densities \cite{bochev2013new}. We use CGFEM to solve the first equation in \eqref{drift} and SUPG to solve the other two equations. To obtain the local conservation for electron and hole current densities, we apply the post-processing technique to the numerical solutions.

A simple test example is presented to illustrate and verify the post-processing technique. We consider  equations \eqref{drift} with
\begin{itemize}
\item $\Omega = [0, 1]\times[0, 1]$
\item $C=0, \lambda =1$, $ \mu_n = 1,  \mu_p = 1$, $D_n = 0.01, D_p = 0.01$
\item $\psi = x + y, n = p = \Big( x - \frac{e^{ \frac{x} {k} } - 1}{e^{\frac{1} {k}} - 1 } \Big) \Big( y - \frac{e^{\frac{y} {k} - 1} }{e^{\frac{1} {k}} - 1 } \Big)$, where $k = 0.01.$
\end{itemize}

The boundary conditions and the function $R$ are derived from the true solutions. The problem is solved on $80\times 80, 160\times 160, 320\times 320,$ and $640\times 640$ uniform meshes. Figure \ref{fig:driftn} shows that the local current and hole densities are not conservative without applying the post-processing. After applying the post-processing, these errors are of scale $10^{-18}$ which is sufficient to claim that the local current and hole densities are conservative. The numerical results of the $H^1$ semi-norm errors of both the SUPG solutions and the post-processed solutions as well as their $\| \cdot \|_{m1}$ metric are shown in Figure \ref{fig:drift}. The convergence rates in these plots confirm the numerical discussion in Section \ref{sec:cs}, which further confirm the analysis in Section \ref{sec:ana}.

\begin{figure} 
\centering
\includegraphics[width=10.0cm]{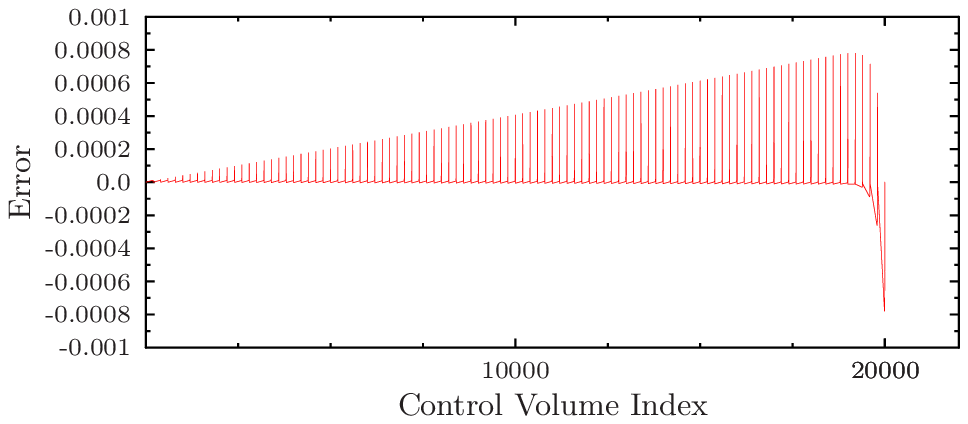}
\includegraphics[width=10.0cm]{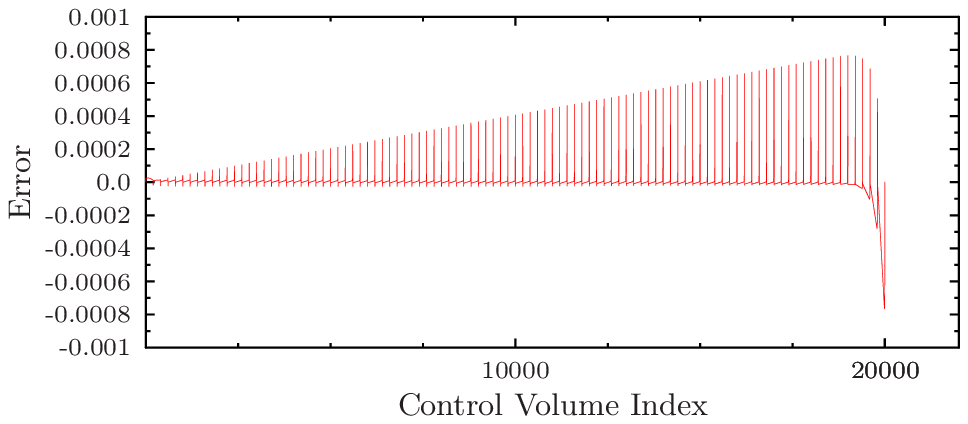}
\caption{Conservation errors without the post-processing technique for $n$(top) and $p$(bottom).}
\label{fig:driftn}
\end{figure}

\begin{figure} 
\centering
\includegraphics[width=4.5cm]{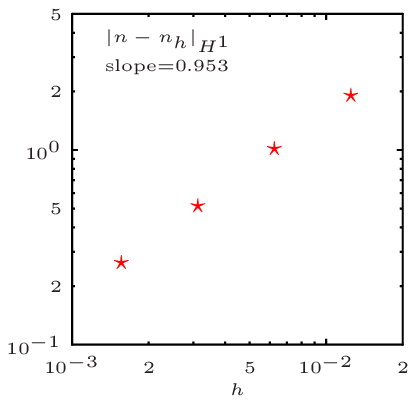}
\includegraphics[width=4.5cm]{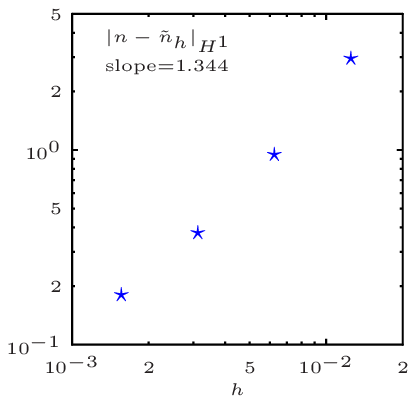}
\includegraphics[width=4.5cm]{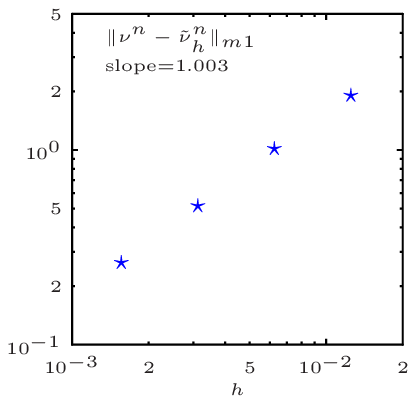}
\includegraphics[width=4.5cm]{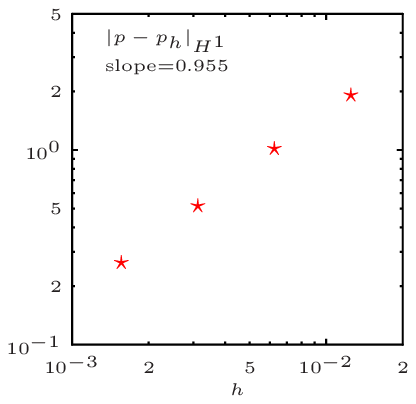}
\includegraphics[width=4.5cm]{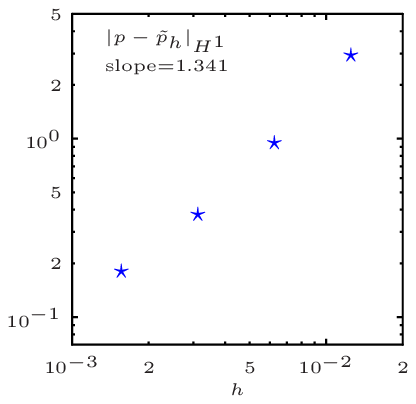}
\includegraphics[width=4.5cm]{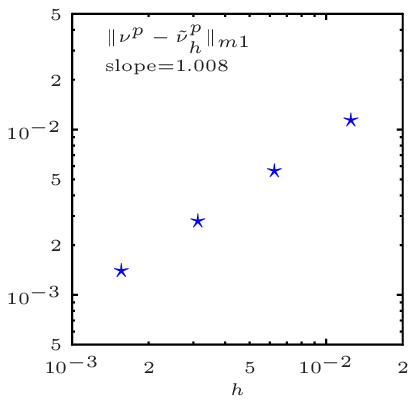}
\caption{Numerical errors for $n$ (top) and $p$ (bottom)}
\label{fig:drift}
\end{figure}

\section{Conclusion}
We proposed a post-processing technique for the CGFEM and SUPG for solving advection diffusion and advection dominated problems. For application problems where the conservation property is crucial, standard finite element solutions are
not adequate because naive calculation of the fluxes from those solutions are not locally conservative. Due to the popularity of standard finite elements, a post-processing for the finite element method is beneficial in this situation. By applying the post-processing technique proposed in this paper, conservative fluxes are obtained on the control volumes which are built on the dual mesh of the finite element mesh. Both analysis and numerical simulations are presented to illustrate the performance of the post-processing technique. The post-processing technique is quite simple and it is easy to implement it  in parallel computing environment.

\bibliographystyle{siam}
\bibliography{ppsupg}

\end{document}